\DeclareMathOperator{\PFP}{PFP}
\DeclareMathOperator{\FP}{FP}
\DeclareMathOperator{\supp}{supp}
\theoremstyle{definition}\newtheorem{definition}{Definition}[section]\newtheorem{remark}[definition]{Remark}
\theoremstyle{plain}\newtheorem{theorem}[definition]{Theorem}\newtheorem{lemma}[definition]{Lemma}\newtheorem{corollary}[definition]{Corollary}\newtheorem{proposition}[definition]{Proposition}\newtheorem{conjecture}[definition]{Conjecture}
\newcommand{\R}{\mathbb{R}}\newcommand{\C}{\mathbb{C}}
\journal{arXiv.org}
\begin{document}

\begin{frontmatter}

\title{Minimization of the Probabilistic $p$-frame Potential}
\author[a,b]{M.~Ehler\corref{cor1}}
\ead{ehlermar@mail.nih.gov}
\cortext[cor1]{Corresponding author}

\author[b]{K.~A.~Okoudjou}
\ead{kasso@math.umd.edu}
\address[a]{National Institutes of Health, National Institute of Child Health and Human Development, Section on Medical Biophysics, 
Bethesda, MD 20892}
\address[b]{University of Maryland, Department of Mathematics, Norbert Wiener Center,
College Park, MD 20742}

%

%

%


%




\begin{abstract} We investigate the optimal configurations of $n$ points on the unit sphere for a class of potential functions. In particular, we characterize these optimal configurations in terms of their 
approximation properties within frame theory. Furthermore, we consider similar optimal configurations in terms of random distributions of points on the sphere. In this probabilistic setting, we characterize these optimal distributions by means of special classes of probabilistic frames. Our work also indicates some connections between statistical shape analysis and frame theory.
\end{abstract}

\begin{keyword}
Frame potential, equiangular tight frames, probabilistic frames, directional statistics, statistical shape analysis

\MSC[2010] 42C15 \sep 42A61 \sep 60B05

\end{keyword}

\end{frontmatter}

\section{Introduction}

Frames are overcomplete (or redundant) sets of vectors that serve to faithfully represent signals. They were introduced in $1952$ by Duffin and Schaeffer \cite{duscha52}, and reemerged with the advent of wavelets \cite{Christensen:2003ab,  Dau92,Ehler:2007aa, Ehler:2008ab, hw89}. 
Though the overcompleteness of frames precludes signals from having unique representation in the frame expansions, it is, in fact, the driving force behind the use of frames in signal processing \cite{Casazza:2003aa, koche1, koche2}.

In the finite dimensional setting, frames are exactly spanning sets. However, many applications require ``custom-built'' frames that possess additional properties which are dictated by these applications. As a result, the construction of frames with prescribed structures  has been actively pursued. For instance, a special class called {\it finite unit norm tight frames } (FUNTFs) that provide a Parseval-type representation very similar to orthonormal bases, has been customized to model data transmissions \cite{Casazza:2003aa, Goyal:2001aa}. Since then the characterization and construction of FUNTFs and some of their generalizations have received a lot of attention \cite{Casazza:2003aa, koche1, koche2}. Beyond their use in applications, FUNTFs are also related to some deep open problems in pure mathematics such as the Kadison-Singer conjecture \cite{cftw}. FUNTFs appear also in statistics where, for instance,  Tyler used them to construct $M$-estimators of multivariate scatter \cite{Tyler:1987}. We elaborate more on the connection between the $M$-estimators and FUNTFs in Remark~\ref{remark:M estimator FUNTF}. These $M$-estimators were subsequently used to construct maximum likelihood estimators for the the wrapped Cauchy distribution on the circle in \cite{KentTaylor1994} and for the angular central Gaussian distribution on the sphere in \cite{Tyler:1988}. 

FUNTFs are exactly the minimizers of a functional called the frame potential \cite{Benedetto:2003aa}. This was extended to characterize all finite tight frames in \cite{Waldron:2003aa}. Furthermore, in \cite{fjko, jbok}, finite tight frames with a convolutional structure, which can be used to model filter banks, have been characterized as minimizers of an appropriate potential. All these potentials are connected to other functionals whose extremals have long been investigated in various settings. We refer to \cite{cokum06, Delsarte:1977aa, Seidel:2001aa, Venkov:2001aa, Welch:1974aa} for details and related results. 

In the present paper, we study objects beyond both FUNTFs and the frame potential. In fact, we consider a family of functionals, the {\it $p$-frame potentials}, which are defined on  sets $\{x_{i}\}_{i=1}^{N}$ of unit vectors in $\R^d$; see Section~\ref{section:pfp}. These potentials have been studied in the context of spherical $t$-designs for even integers $p$, cf.~Seidel in \cite{Seidel:2001aa}, and their minimizers are not just FUNTFs but FUNTFs that inherit additional properties and structure. Common FUNTFs are recovered only for $p=2$. In the process, we extend Seidel's results on spherical $t$-designs in \cite{Seidel:2001aa} to the entire range of positive real $p$. 

In Section~\ref{section:estimates}, we give lower estimates on the $p$-frame potentials, and prove that in certain cases their minimizers are FUNTFs, which possess additional properties and structure. In particular, if $0<p \leq 2$, we completely characterize the minimizers of the $p$-frame potentials when $N=kd$ for some positive integer $k$. Moreover, when $N=d+1$ and $0<p\leq 2$, we characterize the minimizers of the $p$-frame potentials, under a technical condition, which, we have only been able to establish when $d=2$. We conjecture that this technical condition holds when $d>2$. Finally in Section \ref{section:intro prob}, we introduce {\it probabilistic $p$-frames} that generalize the concepts of frames and $p$-frames. We characterize the minimizers of {\it probabilistic $p$-frame potentials} in terms of probabilistic $p$-frames. The latter problem is solved completely for $0<p\leq 2$, and for all even integers $p$. In particular, these last results generalize \cite{Seidel:2001aa} as well as the recently introduced notion of the probabilistic frame potential in \cite{Ehler:2010aa}.

\smallskip 

\textbf{Further relations to statistics:}
Besides the results on FUNTFs used in \cite{KentTaylor1994,Tyler:1987, Tyler:1988}, and mentioned above, frame theory has essentially evolved independently of statistical fields such as statistical shape analysis \cite{Dryden:1998aa} and directional statistics \cite{Mardia:2008aa}. Nevertheless, there still exist several overlaps, and to the best of our knowledge, these overlaps have not yet been fully explored. Recently, frame theory has been used in directional statistics \cite{Ehler:2010ac}, where FUNTFs are utilized to investigate on statistical tests for directional uniformity and to model and analyze patterns found in granular rod experiments. We must point out that similar results were obtained earlier by Tyler in  \cite{Tyler:1988}. 

Probabilistic tight frames are multivariate probability distributions whose second moments' matrix is a multiple of the identity, and they are used in \cite{Ehler:2010aa} to obtain approximate FUNTFs. The latter approximation procedure is connected to a classical problem in multivariate statistics, namely estimating the population covariance from a sample, which is closely related to the $M$-estimators addressed in \cite{KentTaylor1994,Tyler:1987,Tyler:1988}. The $p$-frame potentials and their probabilistic counterparts that we consider in the sequel, are linked to the notion of shape measure, shape space, and mean shape used in statistical shape analysis. In Section \ref{label:section p frame potential}, we establish a precise connection between the full Procrustes estimate of mean shape \cite[Definition 3.3]{Kent:1994aa} which is the eigenvector corresponding to the largest eigenvalue of the frame operator. Moreover, this eigenvalue coincides with the upper frame redundancy as introduced in \cite{Bodmann:2010aa}. The full Procrustes estimate of mean shape also saturates the upper frame inequality. Moreover, the $p$-frame potentials form size measures as required in statistical shape analysis, and their minimizers among all collections of $N$ points on the sphere define a shape space modulo rotations. 

We hope that the present paper will renew interests in more investigation on the role of frames and the $p$-th frame potential in directional statistics and statistical shape analysis.

 \section{The $p$-frame potential}\label{section:pfp}

\subsection{Background on frames and the frame potential}
To introduce frames and their elementary properties, we follow the textbook \cite{Christensen:2003aa}.
\begin{definition}\label{framedef}
A collection of points $\{x_i\}_{i=1}^N\subset\R^d$ is called a \emph{finite frame for $\R^d$} if there are two constants $0<A\leq B$ such that
\begin{equation}\label{frameineq}
A\|x\|^2 \leq \sum_{i=1}^N |\langle x,x_i\rangle|^2 \leq B\|x\|^2,\quad\text{for all $x\in\R^d$.}
\end{equation}
If the frame bounds $A$ and $B$ are equal, the frame $\{x_i\}_{i=1}^N\subset\R^d$ is called \emph{a finite tight frame for $\R^d$}. In this case, 
\begin{equation}\label{deftightf}
A\|x\|^2 = \sum_{i=1}^N |\langle x,x_i\rangle|^2,\quad\text{for all $x\in\R^d$.}
\end{equation}
A finite tight frame $\{x_i\}_{i=1}^N\subset \R^d$  consisting of unit norm vectors is  called a \emph{finite unit norm tight frame (FUNTF)} for $\R^d$. In this case, the frame bound is $A=N/d$. 

A collection of unit vectors $\{x_i\}_{i=1}^N\subset S^{d-1}$ is called \emph{equiangular} if there exists a nonnegative constant $C$ such that  $|\langle x_i,x_j\rangle|=C$, for $i\neq j$. 
\end{definition}

Given a collection of $N$ points $\{x_i\}_{i=1}^N$ in $\R^d$, the \emph{analysis operator} is the mapping 
 \begin{equation*}
 F: \R^d \rightarrow \R^N, \quad x\mapsto \big(\langle x,x_i \rangle\big)_{i=1}^N.
 \end{equation*} 
 Its adjoint operator is called the \emph{synthesis operator} and given by 
 \begin{equation*}
  F^*: \R^N \rightarrow\R^d , \quad (c_i)_{i=1}^N\mapsto \sum_{i=1}^N c_i x_i.
 \end{equation*}
Using these operators, it is easy to see that $\{x_i\}_{i=1}^N$ is a frame if and only if the \emph{frame operator} defined by
\begin{equation*}
S:= F^* F : \R^d \rightarrow \R^d,\quad x\mapsto \sum_{i=1}^N \langle x,x_i\rangle x_i 
\end{equation*}
is positive, self-adjoint, and invertible. In this case, the following reconstruction formula holds
\begin{equation}\label{eq:reconstruction for frames}
x = \sum_{j=1}^N  \langle S^{-1} x_i,x\rangle x_i,\text{ for all $x\in\R^d$,}
\end{equation}
and $\{S^{-1} x_i\}_{i=1}^N$, in fact, is a frame too, called the \emph{canonical dual frame}. If $\{x_i\}_{i=1}^N$ is a frame, then $\{S^{-1/2}x_i\}_{i=1}^N$ is a finite tight frame. Moreover, note that $\{x_i\}_{i=1}^N$ is a FUNTF if and only if its frame operator $S$ is $\frac{N}{d}$ times the identity.

As mentioned in the introduction, the question of the existence and characterization of FUNTFs was settled in \cite{Benedetto:2003aa}, where the \emph{frame potential}, defined by 
\begin{equation}\label{eq:frame potential}
\FP(\{x_i\}_{i=1}^N) = \sum_{i=1}^N\sum_{j=1}^N |\langle x_i,x_j\rangle |^2,
\end{equation} was introduced and used to give a characterization of its minimizers in terms of FUNTFs. More specifically, they prove the following result:

\begin{theorem}\cite[Theorem 7.1]{Benedetto:2003aa}\label{theorem:Benedetto Fickus}
Let $N$ be fixed and consider the minimization of the frame potential among all collections of $N$ points on the sphere $S^{d-1}$.
\begin{itemize}
\item[a)] If $N\leq d$, then the minimum of the frame potential is $N$. The minimizers are exactly the orthonormal systems for $\R^d$ with $N$ elements.
\item[b)] If $N\geq d$, then the minimum of the frame potential is $\frac{N^2}{d}$. The minimizers are exactly the FUNTFs for $\R^d$ with $N$ elements.
\end{itemize}
\end{theorem}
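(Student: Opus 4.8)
The plan is to pass from the combinatorial expression for the frame potential to a spectral one, and then apply an elementary convexity estimate on the eigenvalues of the frame operator. First I would record the key identity
\begin{equation*}
\FP(\{x_i\}_{i=1}^N) = \sum_{i=1}^N\sum_{j=1}^N |\langle x_i,x_j\rangle|^2 = \trace(S^2) = \sum_{k=1}^d \lambda_k^2,
\end{equation*}
where $S=F^*F$ is the (self-adjoint, positive semidefinite) frame operator and $\lambda_1,\dots,\lambda_d\geq 0$ are its eigenvalues; the middle equality comes from expanding $\trace(S^2)$ and using $\langle x_i,x_j\rangle\langle x_j,x_i\rangle = |\langle x_i,x_j\rangle|^2$. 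Because each $x_i$ is a unit vector, the trace is fixed: $\trace(S)=\sum_{i=1}^N\|x_i\|^2 = N$, so $\sum_{k=1}^d\lambda_k = N$. Minimizing $\FP$ is thereby reduced to minimizing $\sum_k\lambda_k^2$ over admissible spectra of fixed sum $N$.

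For part b) ($N\geq d$) I would invoke Cauchy--Schwarz (equivalently, convexity of $t\mapsto t^2$):
\begin{equation*}
\sum_{k=1}^d\lambda_k^2 \;\geq\; \frac{1}{d}\Big(\sum_{k=1}^d\lambda_k\Big)^2 \;=\; \frac{N^2}{d},
\end{equation*}
with equality exactly when all $\lambda_k$ are equal to $N/d$, i.e. when $S=\frac{N}{d}\Id$, which is precisely the FUNTF condition. Hence every configuration obeys $\FP\geq N^2/d$, and the bound is saturated if and only if the configuration is a FUNTF. Since $(S^{d-1})^N$ is compact and $\FP$ is continuous a minimizer exists; together with the production of one FUNTF this pins the minimum at $N^2/d$ with minimizers exactly the FUNTFs of $N$ elements.

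For part a) ($N\leq d$) a more direct splitting is cleanest. Separating diagonal from off-diagonal terms and using $\|x_i\|=1$ gives
\begin{equation*}
\FP(\{x_i\}_{i=1}^N) = \sum_{i=1}^N\|x_i\|^4 + \sum_{i\neq j}|\langle x_i,x_j\rangle|^2 = N + \sum_{i\neq j}|\langle x_i,x_j\rangle|^2 \;\geq\; N,
\end{equation*}
with equality if and only if $\langle x_i,x_j\rangle=0$ for all $i\neq j$, that is, the $x_i$ form an orthonormal system. Such a system of $N$ unit vectors exists in $\R^d$ exactly when $N\leq d$, so the minimum equals $N$ and is attained precisely by the orthonormal systems.

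The main obstacle lies in the tightness claim of part b): the spectral estimate yields only $\FP\geq N^2/d$, so to know this value is actually achieved I must guarantee that a FUNTF exists for every $N\geq d$. I would supply this either by an explicit construction (for instance harmonic frames, or a Gram-matrix realization of the constant spectrum $\lambda_k\equiv N/d$ via a result on diagonals of matrices with prescribed eigenvalues), or, along a self-contained route, by studying the critical points of $\FP$ on $(S^{d-1})^N$ and showing that at any local minimum two unequal eigenvalues of $S$ could be flattened by an infinitesimal rotation of the corresponding vectors, forcing the spectrum to be constant. Part a) requires no such input, since orthonormal systems manifestly exist whenever $N\leq d$.
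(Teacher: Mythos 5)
Your proposal is correct, but note first that the paper itself does not prove this theorem: it is quoted with a citation to Benedetto and Fickus, so the only meaningful comparison is with their original argument. Your route --- writing $\FP(\{x_i\}_{i=1}^N)=\trace(S^2)=\sum_{k=1}^d\lambda_k^2$ with $\trace(S)=\sum_{k=1}^d\lambda_k=N$ fixed, then Cauchy--Schwarz for part b) and the diagonal/off-diagonal split for part a) --- is the standard spectral proof, and it is genuinely different from the original one, which studies critical points of the frame potential under the frame force and shows that for $N\geq d$ \emph{every local minimizer} on $(S^{d-1})^N$ is already a FUNTF. What your approach buys is brevity and transparency: both lower bounds and their equality cases ($S=\frac{N}{d}\Id$, resp.\ pairwise orthogonality) follow in a few lines. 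What it gives up is exactly the point you flag as the main obstacle: the spectral inequality alone does not show that the value $\frac{N^2}{d}$ is attained, i.e.\ that FUNTFs with $N$ vectors exist for every $N\geq d$, whereas in the Benedetto--Fickus argument existence comes for free --- a global minimizer exists by compactness of $(S^{d-1})^N$, and their local analysis forces any such minimizer to be a FUNTF. Your proposed repairs are all viable: real harmonic frames give an explicit FUNTF for every $N\geq d$; alternatively, since the diagonal $(1,\dots,1)$ is majorized by the spectrum $(\tfrac{N}{d},\dots,\tfrac{N}{d},0,\dots,0)$ precisely when $N\geq d$, the Schur--Horn theorem produces an $N\times N$ positive semidefinite matrix with unit diagonal, rank at most $d$, and constant nonzero spectrum, which is the Gram matrix of a FUNTF; and your third suggestion (flattening two unequal eigenvalues of $S$ by an infinitesimal rotation at a putative minimizer) is essentially a reconstruction of the original proof. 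Any one of these, actually carried out rather than sketched, closes the gap in part b); part a) as you wrote it is already complete, since orthonormal systems with $N\leq d$ elements trivially exist.
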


We shall prove in the sequel that the frame potential is just an example in a family of functionals defined on points on the sphere, and whose minimizers have approximation properties similar to those of the frame potential. But first, we briefly comment on the relation between FUNTFs and $M$-estimators of multivariate scatter:
\begin{remark}\label{remark:M estimator FUNTF}
The concept of FUNTFs is used in signal processing to represent a signal $x\in\R^d$ by means of $x=\frac{d}{N}\sum_{i=1}^N\langle x,x_i\rangle x_i$, which is similar to the well-known expansion in an orthonormal basis. FUNTFs have also been used in statistics to derive $M$-estimators of multivariate scatter: For a sample $\{x_i\}_{i=1}^N\subset \R^d$, where $x_i\neq 0$, for $i=1\ldots,N$, Tyler aims to find a symmetric positive definite matrix $\Gamma$ such that 
\begin{equation*}
M(\Gamma) = \frac{d}{N}\sum_{i=1}^N \frac{\Gamma^{1/2}x_i x_i' \Gamma^{1/2}}{x_i'\Gamma x_i}
\end{equation*}
is the identity matrix. Whenever this is possible, the estimate $V$ of the population scatter matrix is then given by $V=\Gamma^{-1}$. We refer to  \cite{Tyler:1987} for details. Note that $M(\Gamma)=I_d$ implies that 
\begin{equation*}
\bigg\{\frac{\Gamma^{1/2}x_i}{\sqrt{x_i'\Gamma x_i}}\bigg\}_{i=1}^{N}=\bigg\{\frac{\Gamma^{1/2}x_i}{\|\Gamma^{1/2}x_{i}\|}\bigg\}_{i=1}^{N} \subset S^{d-1}
\end{equation*}
 forms a FUNTF. Moreover, $\{x_i\}_{i=1}^N\subset S^{d-1}$ is a FUNTF if and only if $M(I_d)=I_d$.
\end{remark}

\subsection{Definition of the $p$-frame potential}\label{label:section p frame potential}

\begin{definition}\label{pframepotential}
Let $N$ be a positive integer, and $0<p< \infty$. Given a collection of unit vectors $\{x_i\}_{i=1}^{N}\subset S^{d-1}$, the \emph{$p$-frame potential} is the functional
  \begin{equation}\label{eq:pth frame potential}
  \FP_{p, N}(\{x_{i}\}_{i=1}^{N})=\sum_{i, j=1}^{N}|\langle x_i,x_j\rangle |^p. 
 \end{equation}
When, $p=\infty$, the definition reduces to

\begin{equation*}
\FP_{\infty, N}(\{x_{i}\}_{i=1}^{N})=\sup_{i\neq j} |\langle x_i,x_j\rangle |.
\end{equation*}
\end{definition}

It is clear that the $p$-frame potential generalizes the frame potential in \eqref{eq:frame potential}. Finite frames and the $p$-frame potential also extend to complex $\{z_i\}_{i=1}^N\subset S_{\C}^{d-1}=\{z\in\C^d : \|z\|=1\}$ and are related to statistical shape analysis, a tool to quantitatively track the physical deformation of objects. We refer to \cite[Chapters 2, 3 $\&$ 4]{Dryden:1998aa} for more details on shape analysis, but we briefly indicate here its link to the $p$-frame potential.  The shape of an object is specified by landmark points that altogether form the shape space. Often, a suitable transformation is applied first in order to study shape independently on the object's size. To remove the feature of size, we must specify a size measure $g$ (\cite[Definition 2.2]{Dryden:1998aa}), which is a positive function defined on  $S_{\C}^{d-1}$ that satisfies  
\begin{equation*}
g(s\{z_i\}_{i=1}^N) = s g(\{z_i\}_{i=1}^N),\quad\text{for all $s\in\R_{+}$ and $\{z_i\}_{i=1}^N\subset S_{\C}^{d-1}$}.
\end{equation*}
It is immediate that the following family of functionals can be seen as size measures: 
\begin{equation*}
g_p(\{z_i\}_{i=1}^N) := \big(\sum_{i, j=1}^{N}|\langle z_i,z_j\rangle |^{p}\big)^{\frac{1}{p}} =\bigg(\FP_{N,p}(\{z_i\}_{i=1}^N)\bigg)^{1/p}.
\end{equation*}
In particular, $g_1$ represents the centroid size (when the shape is centered at the origin), which is one of the most common size measures in statistical shape analysis. Given two complex configurations $z_1,z_2\in\C^d$ derived from landmarks that code two-dimensional shape, the full Procrustes distance (\cite[Definition 3.2]{Dryden:1998aa})  is 
\begin{equation*}
d_F(z_1,z_2)= \inf_{\beta,\theta,a,b} \bigg{\|}\frac{z_1}{\|z_1\|} - \frac{z_2}{\|z_2\|}\beta e^{i\theta} -a -ib \bigg{\|}.
\end{equation*}
 Given $N$ configurations $\{z_i\}_{i=1}^N\subset \C^d$, the full Procrustes estimate of mean shape is defined by 
\begin{equation*}
\bar{z}_P = \arg\inf_{\|z\|=1} \big( \sum_{i=1}^N d_F^2(z_i,z) \big),
\end{equation*}
cf.~\cite[Definition 3.3]{Dryden:1998aa}. The average axis from the complex Bingham maximum likelihood estimator is the same as the full Procrustes estimate of mean shape for two-dimensional shapes, and the same holds for the complex Watson distribution, cf.~\cite[Sections 6.2 $\&$ 6.3]{Dryden:1998aa}. If we assume that $\{z_i\}_{i=1}^N$ are centered around zero, then $\bar{z}_P$ is given by the eigenvector corresponding to the largest eigenvalue $\lambda$ of the frame operator of the normalized collection $\{\frac{z_i}{\|z_i\|}\}_{i=1}^N$. Furthermore, one observes that this eigenvalue $\lambda$ is the upper frame redundancy of $\{z_i\}_{i=1}^N$ as introduced in \cite{Bodmann:2010aa}, which also coincides with the optimal upper frame bound $B$ in \eqref{frameineq}. Therefore, the full Procrustes estimate of mean shape satisfies the upper frame inequality with equality. Moreover, we have observed that the root mean square of the full Procrustes estimate of mean shape is $1-\frac{\lambda}{N}$.

Before stating the next elementary result, we recall some basic definitions in physics: A {\it conservative force} $F$, is a vector field defined on $\R^d$, such that $-F$ is the gradient of some {\it potential} $P$ that is then induced by the conservative force. Lemma~\ref{conservative} below was proved for the frame potential in \cite{Benedetto:2003aa}, and we extend it to all $p$-frame potentials with $1<p<\infty$:
\begin{lemma}\label{conservative}
For each $p\in (1, \infty)$, the $p$-frame potential $\FP_{p, N}$ is induced by the conservative force $F_p= f_p(\|a-b\|)(a-b)$, for $a,b\in S^{d-1}$, where
\begin{equation*}
 f_p(x):=\begin{cases}
 p(1-\frac{x^2}{2})^{p-1},&\text{for } |x|\leq \sqrt{2},\\
 -p(\frac{x^2}{2}-1)^{p-1},&\text{otherwise.}
 \end{cases}
 \end{equation*}
\end{lemma}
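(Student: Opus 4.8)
The plan is to reduce the claim to a single pairwise interaction and to identify the potential of the central force $F_p$ by solving a one-dimensional ordinary differential equation. Fix $b\in S^{d-1}$ and regard $a$ as the moving point. Since $F_p(a,b)=f_p(\|a-b\|)(a-b)$ is radial about $b$ with magnitude depending only on $r:=\|a-b\|$, it is automatically curl-free, and the only real content is to exhibit a potential $P$ with $F_p=-\nabla_a P$. Writing $P=P(r)$ and using $\nabla_a r=(a-b)/r$, the relation $F_p=-\nabla_a P$ is equivalent to the scalar identity $P'(r)=-r\,f_p(r)$.

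First I would integrate this ODE on each of the two ranges dictated by the definition of $f_p$. For $r<\sqrt2$ the substitution $u=1-r^2/2$ turns $P'(r)=-r\,p(1-r^2/2)^{p-1}$ into $p\,u^{p-1}\,du$, giving $P(r)=(1-r^2/2)^p$; for $r>\sqrt2$ the substitution $v=r^2/2-1$ turns $P'(r)=r\,p(r^2/2-1)^{p-1}$ into $p\,v^{p-1}\,dv$, giving $P(r)=(r^2/2-1)^p$ (the constants of integration are chosen so that both pieces vanish at $r=\sqrt2$). The key simplification is the unit-norm identity $\langle a,b\rangle=1-\tfrac12\|a-b\|^2$, valid precisely because $a,b\in S^{d-1}$: it shows that in both ranges $P(r)=|\langle a,b\rangle|^p$, since $\langle a,b\rangle>0$ exactly when $r<\sqrt2$ and $\langle a,b\rangle<0$ exactly when $r>\sqrt2$.

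It then remains to glue the two pieces across $r=\sqrt2$, i.e.\ across the configuration $a\perp b$ where $\langle a,b\rangle=0$. Both pieces of $P$ extend continuously to $P(\sqrt2)=0$, and --- this is the crux --- both one-sided derivatives $P'(r)=\mp r\,p(\,\cdot\,)^{p-1}$ tend to $0$ as $r\to\sqrt2$ from either side precisely when $p>1$; this is exactly where the hypothesis $p\in(1,\infty)$ enters, since for $p\le1$ the function $|\langle a,b\rangle|^p$ has a corner at $a\perp b$ and $F_p$ fails to be continuous there. I expect this smoothness check at $\langle a,b\rangle=0$ to be the only genuine obstacle; everything else is a routine antiderivative computation.

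Finally I would assemble the global statement. Each ordered pair $(x_i,x_j)$ with $i\neq j$ contributes the conservative pairwise potential $|\langle x_i,x_j\rangle|^p$, while each diagonal term contributes the constant $|\langle x_i,x_i\rangle|^p=1$. Summing over all pairs yields $\sum_{i\neq j}|\langle x_i,x_j\rangle|^p=\FP_{p,N}(\{x_i\}_{i=1}^N)-N$, so the $p$-frame potential and the total potential energy of the force field $F_p$ differ only by the additive constant $N$. Since an additive constant does not change the gradient, the force exerted on each point $x_k$ is $-\nabla_{x_k}\FP_{p,N}=\sum_{j\neq k}F_p(x_k,x_j)$, which proves that $\FP_{p,N}$ is induced by the conservative force $F_p$.
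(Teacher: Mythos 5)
Your proof is correct and follows essentially the same route as the paper: the paper writes down the radial potential $\mathbf{p}_p(x)=|1-x^2/2|^p$, asserts $\mathbf{p}_p'(x)=-xf_p(x)$, and uses $\langle a,b\rangle=1-\tfrac12\|a-b\|^2$ to identify it with $|\langle a,b\rangle|^p$, whereas you recover the same potential by antidifferentiating $-r f_p(r)$ and then make explicit the $C^1$ gluing at $r=\sqrt2$, which is precisely where $p>1$ is needed and which the paper leaves as an unproved ``is differentiable'' claim. The only blemish is your final display: since $\FP_{p,N}$ sums over ordered pairs, one has $-\nabla_{x_k}\FP_{p,N}=2\sum_{j\neq k}F_p(x_k,x_j)$, a harmless factor of $2$ that affects neither conservativity nor the equilibria.
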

$F_p$ is a central force between the `particles' $a$ and $b$ that we call the \emph{$p$-frame force}.
\begin{proof}   The function  
 \begin{equation*}
 \mathbf{p}_p(x):=|1-\frac{x^2}{2}|^p=\begin{cases}
 (1-\frac{x^2}{2})^p,&\text{for } |x|\leq \sqrt{2},\\
 (\frac{x^2}{2}-1)^p,&\text{otherwise,}
 \end{cases}
 \end{equation*}
is differentiable and satisfies  $ \mathbf{p}_p'(x)=-xf_p(x)$. This is sufficient to verify that the potential $P_p(a,b):=\mathbf{p}_p(\|a-b\|)$, defined for $a,b\in S^{d-1}$, satisfies $\nabla_a P_p(a,b) = -F(a,b)$, where $b$ is held fixed. Thus, $F_p$ is a conservative vector field. The physical meaningful potential $P_p(a,b)$ is in fact given by 
$$ P_p(a,b)  =\mathbf{p}_p(\|a-b\|) = \big |1-\frac{1}{2}\|a-b\|^2 \big |^p = \big|\langle a,b\rangle\big|^p,$$
 where we used that $\|a\|=\|b\|=1$. 
 Consequently, the $p$-frame potential is induced by the conservative central force $F_p$. 
 \end{proof}
 
 As a consequence of the above lemma, $\{x_i\}_{i=1}^N\subset S^{d-1}$ are in \emph{equilibrium} under the $p$-frame force if they minimize the $p$-frame potential among all collections of $N$ points on the sphere. Note that such a collection of equilibria modulo rotations form a shape space. 

\begin{remark}
We will use repeatedly the fact that for a fixed $\{x_{i}\}_{i=1}^{N} \subset S^{d-1}$, the $p$-frame potential $\FP_{p,N}(\{x_{i}\}_{i=1}^{N})$ is a decreasing and continuous  function of $p \in (0, \infty)$. 

\end{remark}

\section{Lower estimates for the $p$-frame potential}\label{section:estimates}
We start this section with a few elementary results about the minimizers of the $p$-frame potential as well as their connection to $t$-designs. In fact, potentials on the sphere, and $t$-designs have been well investigated \cite{Bachoc:2005aa, cokum06, Delsarte:1977aa, Seidel:2001aa, Venkov:2001aa}. However, one of the key differences between $t$-designs and our $p$-frame potential is that the former is considered only for positive integers $t$ while the latter is investigated for $p\in (0, \infty)$.  

\subsection{The Welch bound revisited}\label{subsection:Welch}
If $p=2k$ is an even integer, one can use Welch's results~\cite{Welch:1974aa} to conclude that, for $\{x_i\}_{i=1}^N\subset S^{d-1}$,
\begin{equation}\label{eq:Welch}
\FP_{p, N}(\{x_{i}\}_{i=1}^{N}) \geq \frac{N^2}{\binom{d+k-1}{k}}.
\end{equation}
We shall verify that this estimate is not optimal for small $N$, by proving an  estimate for $\FP_{p, N}$ when $2<p<\infty$. The following Proposition first appeared in \cite{Oktay:2007}: 
\begin{proposition}\label{prop:potential for p>2}
Let $\{x_i\}_{i=1}^N\subset S^{d-1}$, $N\geq d$, and $2<p<\infty$, then
\begin{equation}\label{eq:potential for p>2}
\FP_{p, N}(\{x_{i}\}_{i=1}^{N}) \geq N(N-1)\big(\frac{N-d}{d(N-1)}\big)^{p/2}+N,
\end{equation}
 and equality holds if and only if  $\{x_i\}_{i=1}^N$ is an equiangular FUNTF.
\end{proposition}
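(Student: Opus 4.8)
The plan is to separate the diagonal from the off-diagonal part of the sum and then convert the $p$-th powers into second powers by convexity, so that the quadratic frame-potential bound of Theorem~\ref{theorem:Benedetto Fickus} can be invoked. Since each $x_i$ is a unit vector, the $N$ diagonal terms of $\FP_{p,N}$ each equal $1$, and hence
\begin{equation*}
\FP_{p,N}(\{x_i\}_{i=1}^N) = N + \sum_{i\neq j}|\langle x_i,x_j\rangle|^p .
\end{equation*}
It therefore suffices to bound the off-diagonal sum from below.

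For the main step I would write each off-diagonal summand as $|\langle x_i,x_j\rangle|^p = \big(|\langle x_i,x_j\rangle|^2\big)^{p/2}$. Because $p>2$, the function $t\mapsto t^{p/2}$ is strictly convex and increasing on $[0,\infty)$, so Jensen's inequality applied to the $N(N-1)$ off-diagonal values $|\langle x_i,x_j\rangle|^2$ gives
\begin{equation*}
\frac{1}{N(N-1)}\sum_{i\neq j}|\langle x_i,x_j\rangle|^p \geq \bigg(\frac{1}{N(N-1)}\sum_{i\neq j}|\langle x_i,x_j\rangle|^2\bigg)^{p/2}.
\end{equation*}
Then I would control the inner average from below using the case $p=2$: since $\sum_{i\neq j}|\langle x_i,x_j\rangle|^2 = \FP(\{x_i\}_{i=1}^N) - N$ and $N\geq d$, Theorem~\ref{theorem:Benedetto Fickus}(b) yields $\FP(\{x_i\}_{i=1}^N)\geq N^2/d$, whence the inner average is at least $\tfrac{N-d}{d(N-1)}$. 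Combining this with the monotonicity of $t\mapsto t^{p/2}$ and multiplying through by $N(N-1)$ produces the claimed bound.

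The equality analysis is where the two hypotheses enter separately, and I would treat it as the delicate point. Equality in the combined estimate forces equality in both ingredients. Strict convexity of $t\mapsto t^{p/2}$ for $p>2$ means Jensen is an equality only when all the off-diagonal quantities $|\langle x_i,x_j\rangle|^2$ coincide, that is, when $\{x_i\}_{i=1}^N$ is equiangular; and equality in the frame-potential bound occurs, by Theorem~\ref{theorem:Benedetto Fickus}(b), precisely when $\{x_i\}_{i=1}^N$ is a FUNTF. Thus equality in \eqref{eq:potential for p>2} holds exactly when the configuration is an equiangular FUNTF. For the converse I would simply note that for an equiangular FUNTF the common value $|\langle x_i,x_j\rangle|^2$ must equal the average $\tfrac{N-d}{d(N-1)}$ (forced by the FUNTF condition $\FP=N^2/d$), so the bound is attained. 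The only point requiring care is confirming that strict convexity genuinely rules out any nonconstant off-diagonal profile, which is immediate from $\tfrac{p}{2}>1$.
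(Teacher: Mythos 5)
Your proof is correct and takes essentially the same approach as the paper: where you invoke Jensen's inequality for the strictly convex map $t\mapsto t^{p/2}$, the paper applies H\"older's inequality with $\tfrac{1}{2}=\tfrac{1}{p}+\tfrac{1}{r}$ to the off-diagonal sequence, which is the same power-mean estimate in different clothing. The reduction to Theorem~\ref{theorem:Benedetto Fickus} and the two-part equality analysis (strictness of the convexity/H\"older step forcing equiangularity, equality in the quadratic frame potential forcing the FUNTF property) coincide with the paper's argument, your only cosmetic departure being that you derive the value $|\langle x_i,x_j\rangle|^2=\tfrac{N-d}{d(N-1)}$ directly rather than citing it.
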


\begin{proof}
For $\frac{1}{2}=\frac{1}{p}+\frac{1}{r}$, H\"older's inequality yields
\begin{equation}\label{eq:hoelder numer 1}
\|(\langle x_i,x_j\rangle)_{i\neq j}\|_{\ell_2} \leq \|(\langle x_i,x_j\rangle)_{i\neq j}\|_{\ell_p}(N(N-1))^{1/r}.
\end{equation}
Raising to the $p$-th power and applying $\frac{1}{r}=\frac{1}{2}-\frac{1}{p}$ leads to 
\begin{equation}\label{eq:raising to the power}
\|(\langle x_i,x_j\rangle)_{i\neq j}\|_{\ell_2}^p \leq \|(\langle x_i,x_j\rangle)_{i\neq j}\|_{\ell_p}^p(N(N-1))^{p/2-1}.
\end{equation}
Therefore, 
$$
 \sum_{i\neq j}|\langle x_i,x_j\rangle |^p  \geq  \big(\sum_{i\neq j}|\langle x_i,x_j\rangle |^2\big)^{p/2} (N(N-1))^{1-p/2}.$$
Using the fact that $\sum_{i\neq j}|\langle x_i,x_j\rangle |^2\geq \frac{N^2}{d}-N$ (see Theorem~\ref{theorem:Benedetto Fickus}) implies that
$$ \sum_{i\neq j}|\langle x_i,x_j\rangle |^p \geq \big(N(\frac{N}{d}-1)\big)^{p/2} (N(N-1))^{1-p/2} = N(N-1)\bigg(\frac{N-d}{d(N-1)}\bigg)^{p/2},$$
which proves~\eqref{eq:potential for p>2}. 

To establish the last part of the Proposition, we recall that an equiangular FUNTF $\{x_{k}\}_{k=1}^{N} \subset \R^d$ satisfies 
\begin{equation}\label{eq:equi}
|\langle x_i,x_j\rangle | = \sqrt{\frac{N-d}{d(N-1)}},\quad \text{ for all }i\neq j
\end{equation} 
see, \cite{Casazza:2008ab,Sustik:2007aa}, for details. Consequently, if $\{x_{k}\}_{k=1}^{N}$ is an equiangular FUNTF, then~\eqref{eq:potential for p>2} holds with equality. 

On the other hand, if equality holds in \eqref{eq:potential for p>2}, then $\sum_{i\neq j}|\langle x_i,x_j\rangle |^2 = \frac{N^2}{d}-N$ and $\{x_i\}_{i=1}^N$ is a FUNTF due to Theorem \ref{theorem:Benedetto Fickus}. Moreover, the H\"older estimate \eqref{eq:hoelder numer 1} must have been an equality which means that $|\langle x_i,x_j\rangle|=C$ for $i\neq j$, and some constant $C\geq 0$. Thus, the FUNTF must be equiangular.
\end{proof}

 By comparing \eqref{eq:Welch} with \eqref{eq:potential for p>2}, it is easily seen that the Welch bound is not optimal for small $N$:

 \begin{proposition}\label{prop:second one}
 Let $\{x_i\}_{i=1}^N\subset S^{d-1}$ and $p=2k>2$ be an even integer. If $d<N \leq  \binom{d+k-1}{k}$, then 
  \begin{equation}\label{eq:abc}
 \FP_{p,N}(\{x_{k}\}_{k=1}^{N}) \geq N(N-1)\big(\frac{N-d}{d(N-1)}\big)^k +N >\frac{N^2}{\binom{d+k-1}{k}}.
 \end{equation} 
 \end{proposition}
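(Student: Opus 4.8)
The plan is to observe that the first inequality in \eqref{eq:abc} is nothing more than Proposition~\ref{prop:potential for p>2} specialized to the even integer $p=2k$. Indeed, for such $p$ we have $p/2=k$, so the right-hand side of \eqref{eq:potential for p>2} becomes exactly $N(N-1)\big(\frac{N-d}{d(N-1)}\big)^k+N$, and since $N>d$ (hence in particular $N\geq d$) and $2<p<\infty$, the hypotheses of Proposition~\ref{prop:potential for p>2} are met. Thus the lower bound part is immediate, and only the strict second inequality in \eqref{eq:abc} requires a genuine argument.

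For that second inequality I would divide through by $N>0$, reducing the claim to
\[
(N-1)\Big(\frac{N-d}{d(N-1)}\Big)^k+1 > \frac{N}{\binom{d+k-1}{k}}.
\]
I would then estimate each side separately. On the left, since $N>d$ we have $N-d\geq 1>0$ and $N-1>0$, so the first summand is strictly positive; consequently the entire left-hand side strictly exceeds $1$. On the right, the hypothesis $N\leq\binom{d+k-1}{k}$ gives $\frac{N}{\binom{d+k-1}{k}}\leq 1$. Chaining these two estimates yields a left-hand side that is $>1\geq$ the right-hand side, which is the desired strict inequality; multiplying back by $N$ recovers \eqref{eq:abc}.

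The only point that needs care---and it is where the strictness lives---is that one must invoke the \emph{strict} lower bound $N>d$ rather than $N\geq d$: it is precisely this that forces the extra term $(N-1)\big(\frac{N-d}{d(N-1)}\big)^k$ to be positive and thereby pushes the left-hand side above $1$. The case $N=\binom{d+k-1}{k}$ is then handled automatically, since there the right-hand side equals $1$ while the left-hand side is still strictly larger. There is no substantial analytic obstacle here (no monotonicity of the binomial coefficient or of the bound in $N$ is needed); the content of the proposition is essentially the clean bookkeeping that the strict inequality $N>d$, together with the additive term coming from the diagonal contribution, already separates the two bounds across the entire admissible range.
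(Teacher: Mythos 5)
Your proof is correct and follows essentially the same route as the paper's: the first inequality is Proposition~\ref{prop:potential for p>2} with $p=2k$, and the strict inequality comes from noting that $N>d$ makes $(N-1)\bigl(\frac{N-d}{d(N-1)}\bigr)^k$ strictly positive while $N\leq\binom{d+k-1}{k}$ gives $\frac{N}{\binom{d+k-1}{k}}\leq 1$, then scaling by $N$. The paper phrases this by adding the positive term to the inequality $1\geq \frac{N}{\binom{d+k-1}{k}}$ and multiplying by $N$, which is the same bookkeeping you perform after dividing by $N$.
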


 \begin{proof} 
 The condition on $N$ implies $1 \geq   \frac{N}{\binom{d+k-1}{k}}$, and adding $ (N-1)\big(\frac{N-d}{d(N-1)}\big)^k>0$ to the right hand side leads to
 \begin{equation*}
  (N-1)\big(\frac{N-d}{d(N-1)}\big)^k +1 >  \frac{N}{\binom{d+k-1}{k}}.
 \end{equation*}
 Multiplication by $N$ and Proposition \ref{prop:potential for p>2} then yield \eqref{eq:abc}.
 \end{proof}

\begin{remark} The estimate in Proposition \ref{prop:potential for p>2} is sharp if and only if an equiangular FUNTF exists. In \cite[Sections 4 $\&$ 6]{Sustik:2007aa}, construction (and hence existence) of equiangular FUNTFs was established when $d+2 \leq N \leq 100$. For general $d$ and $N$, a necessary condition for existence of equiangular FUNTFs is given, and it is conjectured that the conditions are sufficient as well. The authors essentially provide on upper bound on $N$ that depends on the dimension $d$. 
Therefore, Proposition~\ref{prop:potential for p>2} might not be optimal when the redundancy $N/d$ is much larger than $1$. 
\end{remark}

\subsection{Relations to spherical $t$-designs}\label{subsection:t design}
A \emph{spherical $t$-design} is a finite subset $\{x_i\}_{i=1}^N$ of the unit sphere $S^{d-1}$ in $\R^d$,
such that,
\begin{equation*}
\frac{1}{N}\sum_{i=1}^N h(x_i) = \int_{S^{d-1}} h(x)d\sigma(x),
\end{equation*}
for all homogeneous polynomials $h$ of total degree equals or less than $t$ in $d$ variables and where $\sigma$ denotes the uniform surface measure on $S^{d-1}$ normalized to have mass one. The following result is due to \cite[Theorem 8.1]{Venkov:2001aa} (see \cite{Seidel:2001aa}, \cite{Delsarte:1977aa} for similar results).

 \begin{theorem}\label{theorem:p even integer discrete}\cite[Theorem 8.1]{Venkov:2001aa}
Let $p=2k$ be an even integer and $\{x_i\}_{i=1}^N=\{-x_i\}_{i=1}^N\subset S^{d-1}$, then 
  \begin{equation*}
\FP_{p, N}(\{x_{i}\}_{i=1}^{N})  \geq  \frac{1\cdot 3\cdot 5\cdots(p-1)}{d(d+2)\cdots (d+p-2) }N^2,
 \end{equation*}
and equality holds if and only if $\{x_i\}_{i=1}^N$ is a spherical $p$-design. 
 \end{theorem}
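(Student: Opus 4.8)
The plan is to exploit the fact that for even $p=2k$ the absolute value disappears, $|\langle x_i,x_j\rangle|^p=\langle x_i,x_j\rangle^{2k}$, and then to decompose the kernel $t^{2k}$ into Gegenbauer (ultraspherical) polynomials adapted to $S^{d-1}$. Writing $G_n$ for the Gegenbauer polynomial of degree $n$ associated with the parameter $\lambda=(d-2)/2$ and normalized so that $G_n(1)=1$, I would expand
$$t^{2k}=\sum_{\ell=0}^{k}a_\ell\,G_{2\ell}(t),$$
where only even degrees occur because $t^{2k}$ is even. The constant coefficient is obtained by integrating against the normalized surface measure,
$$a_0=\int_{S^{d-1}}\langle x,y\rangle^{2k}\,d\sigma(y)=\frac{1\cdot3\cdots(2k-1)}{d(d+2)\cdots(d+2k-2)},$$
a standard moment computation (via the Beta-integral $\int_{-1}^1 t^{2k}(1-t^2)^{(d-3)/2}\,dt$), which already produces the constant appearing in the statement.

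The second ingredient is the addition formula for spherical harmonics. If $\{Y_{n,m}\}_{m=1}^{h_n}$ is an $L^2(\sigma)$-orthonormal basis of the degree-$n$ harmonics, then $G_n(\langle x,y\rangle)=\tfrac{1}{h_n}\sum_{m}Y_{n,m}(x)Y_{n,m}(y)$, whence
$$\sum_{i,j=1}^N G_n(\langle x_i,x_j\rangle)=\frac{1}{h_n}\sum_{m=1}^{h_n}\Big(\sum_{i=1}^N Y_{n,m}(x_i)\Big)^2\ge 0.$$
Combining this with the expansion gives
$$\FP_{p,N}(\{x_i\}_{i=1}^N)=\sum_{\ell=0}^{k}a_\ell\sum_{i,j}G_{2\ell}(\langle x_i,x_j\rangle)=a_0N^2+\sum_{\ell=1}^{k}a_\ell\sum_{i,j}G_{2\ell}(\langle x_i,x_j\rangle).$$
Provided each $a_\ell\ge 0$, the sum over $\ell\ge 1$ is nonnegative and the lower bound $\FP_{p,N}\ge a_0N^2$ follows.

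For the equality statement I would read off from the identity above that, once every $a_\ell$ with $1\le\ell\le k$ is strictly positive, equality forces $\sum_{i,j}G_{2\ell}(\langle x_i,x_j\rangle)=0$, i.e. $\sum_i Y_{2\ell,m}(x_i)=0$ for all $m$ and all $\ell=1,\dots,k$; that is, the configuration annihilates every spherical harmonic of even degree $2,4,\dots,2k$. Here the hypothesis $\{x_i\}_{i=1}^N=\{-x_i\}_{i=1}^N$ enters decisively: any odd-degree harmonic $Y$ satisfies $Y(-x)=-Y(x)$, so antipodal symmetry makes $\sum_i Y(x_i)=0$ automatically. Hence the even-degree conditions are exactly equivalent to $\sum_i Y(x_i)=0=\int Y\,d\sigma$ for all harmonics of degree $1\le n\le 2k$, which is the defining property of a spherical $p$-design. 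The converse direction (design implies equality) only uses $a_\ell\ge0$ together with the vanishing of all these harmonic moments.

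The crux of the argument, and the step I expect to be the main obstacle, is establishing that every coefficient $a_\ell$ is strictly positive. Nonnegativity, which already suffices for the lower bound, follows from Schoenberg's characterization: $\langle x,y\rangle^{2k}$ is a Schur (Hadamard) power of the positive-definite kernel $\langle x,y\rangle^2$, hence positive definite on $S^{d-1}$, and Schoenberg's theorem identifies such kernels precisely with those whose Gegenbauer coefficients are nonnegative. The strict positivity of $a_1,\dots,a_k$, which is what the ``only if'' part of the equality assertion genuinely needs, I would extract from the classical explicit connection-coefficient formula expressing the monomial $t^{2k}$ in the Gegenbauer basis, whose coefficients are an explicit product of Gamma factors that is manifestly positive for $\lambda\ge 0$ (with the Chebyshev case $d=2$ treated separately). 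Verifying this positivity, rather than any of the algebra above, is where the real work lies.
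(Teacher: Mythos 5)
Your proposal is correct and takes essentially the same approach as the paper: the paper itself imports this statement from Venkov without proof, but its proof of the probabilistic generalization (Theorem \ref{theorem:p even integer}) runs exactly along your lines --- expand $t^{p}$ in Gegenbauer polynomials, use strict positivity of the coefficients together with positive definiteness of the Gegenbauer kernels to obtain the lower bound $\lambda_0 N^2$, and settle equality by a Venkov-style argument in which antipodal symmetry disposes of the odd-degree harmonics. Your addition-formula derivation of positive definiteness and your Beta-integral/connection-coefficient computations merely make self-contained what the paper cites from Bachoc, Delsarte--Goethals--Seidel, and Seidel.
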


\subsection{Optimal configurations for  the $p$-frame potential}  
We first use Theorem~\ref{theorem:Benedetto Fickus} to characterize the minimizers of the $p$-frame potential for $0<p<2$ provided that the number of points $N$ is a multiple of the dimension $d$:

\begin{theorem}\label{prop:k multiples and small p}
Let $0<p<2$ and assume that $N=kd$ for some positive integer $k$. Then the minimizers of the $p$-frame potential are exactly the $k$ copies of any orthonormal basis modulo multiplications by $\pm 1$. The minimum of~\eqref{eq:pth frame potential}, over all sets of $N=kd$ unit norm vectors, is $k^{2}d$.
\end{theorem}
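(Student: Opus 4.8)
The plan is to bound $\FP_{p,N}$ from below by the ordinary frame potential $\FP_{2,N}$ and then invoke Theorem~\ref{theorem:Benedetto Fickus}. The engine is the elementary inequality $t^p \ge t^2$, valid for all $t \in [0,1]$ whenever $0 < p < 2$, applied to each $t = |\langle x_i, x_j\rangle| \le 1$.

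First I would record, for any unit vectors $\{x_i\}_{i=1}^N \subset S^{d-1}$,
\[
\FP_{p,N}(\{x_i\}_{i=1}^N) = \sum_{i,j=1}^N |\langle x_i,x_j\rangle|^p \ \ge\ \sum_{i,j=1}^N |\langle x_i,x_j\rangle|^2 = \FP_{2,N}(\{x_i\}_{i=1}^N).
\]
Since $N = kd \ge d$, part (b) of Theorem~\ref{theorem:Benedetto Fickus} gives $\FP_{2,N} \ge N^2/d = k^2 d$, hence $\FP_{p,N} \ge k^2 d$. To see the bound is sharp, I would evaluate $\FP_{p,N}$ on $k$ copies of an orthonormal basis directly: the $dk^2$ ordered pairs of vectors sharing a common direction contribute $1$ each (their inner product has modulus $1$), and all remaining inner products vanish, so $\FP_{p,N} = dk^2 = k^2 d$. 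This establishes both the value of the minimum and the fact that the claimed configurations attain it.

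For the characterization, the key observation is that $\FP_{p,N} = k^2 d$ forces both of the above inequalities to be equalities. Equality in the pointwise step needs $t^p = t^2$ for every occurring value $t = |\langle x_i,x_j\rangle|$; since $t^p > t^2$ strictly for $t \in (0,1)$ when $p < 2$, this means $|\langle x_i,x_j\rangle| \in \{0,1\}$ for all $i,j$. Equality in the second step means, by Theorem~\ref{theorem:Benedetto Fickus}(b), that $\{x_i\}_{i=1}^N$ is a FUNTF, so its frame operator is $S = \frac{N}{d}I_d = kI_d$. By Cauchy--Schwarz, $|\langle x_i,x_j\rangle| = 1$ holds precisely when $x_j = \pm x_i$, so the vectors split into mutually orthogonal directions $v_1,\dots,v_m$ (with $m \le d$, being orthonormal), the direction $v_\ell$ occurring with some multiplicity $n_\ell$, where $\sum_{\ell=1}^m n_\ell = N$. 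The frame operator then acts as multiplication by $n_\ell$ on each line $\spann\{v_\ell\}$ and by $0$ on the orthogonal complement of $\spann\{v_1,\dots,v_m\}$; the requirement $S = kI_d$ forces $m = d$ and $n_1 = \dots = n_d = k$, i.e.\ exactly $k$ copies of an orthonormal basis modulo signs.

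I expect the last structural step to be the main point requiring care: the pointwise equality condition by itself yields only a disjoint union of mutually orthogonal ``$\pm$-cliques,'' and it is the FUNTF property that pins down the number of directions to exactly $d$ and makes every multiplicity equal to $k$. An equivalent way to close this step, avoiding explicit spectral bookkeeping, is to note that once all inner products lie in $\{0,1\}$ the potential collapses to $\sum_{\ell=1}^m n_\ell^2$, and by convexity $\sum_\ell n_\ell^2 \ge (\sum_\ell n_\ell)^2/m \ge (kd)^2/d = k^2 d$, with equality exactly when $m = d$ and all $n_\ell = k$.
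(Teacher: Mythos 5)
Your proof is correct and follows essentially the same route as the paper: bound $\FP_{p,N}$ below by $\FP_{2,N}$ (the paper phrases this as monotonicity of the potential in $p$, which is exactly your pointwise inequality $t^p\ge t^2$ on $[0,1]$), invoke Theorem~\ref{theorem:Benedetto Fickus}, evaluate on $k$ copies of an orthonormal basis, and extract the characterization from the equality condition $|\langle x_i,x_j\rangle|\in\{0,1\}$. If anything, your closing structural step (the spectral bookkeeping, or equivalently the convexity bound $\sum_\ell n_\ell^2\ge N^2/d$, forcing $m=d$ and $n_\ell=k$) is spelled out more carefully than in the paper, which asserts that conclusion without detail.
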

\begin{proof}
If we fix a collection of vectors $\{x_i\}_{i=1}^N$, then the frame potential is a decreasing function in $p\in(0, 2)$. Therefore, $$N^{2}/d=\FP_{2,N}(\{x_{i}\}_{i=1}^{N}) \leq \FP_{p,N}(\{x_{i}\}_{i=1}^{N}).$$

Now suppose that $\{y_{i}\}_{i=1}^{N}$ consists of $k$ copies of an  orthonormal basis $\{e_{i}\}_{i=1}^{d}$for $\R^d$. Then,  $$\min \FP_{p, N}(\{x_{i}\}_{i=1}^{N}) \leq \FP_{p, N}(\{y_{i}\}_{i=1}^{N})=\FP(\{y_{i}\}_{i=1}^{N})=N^{2}/d.$$ 
Consequently, $$\min \FP_{p, N}(\{x_{k}\}_{k=1}^{N})=\min \FP(\{x_{k}\}_{k=1}^{N})=N^{2}/d.$$ 
Thus, $\FP_{p,N}$ has the same minimum as $\FP$. Clearly $k$ copies of an orthonormal basis of $\R^d$ form a  FUNTF, and hence minimize the $\FP$ due to Theorem \ref{theorem:Benedetto Fickus}. Consequently, the $k$ copies of an orthonormal basis also minimize the $p$-frame potential  for $0<p<2$. On the other hand, the inner products must be $0$ or $1$ to obtain a minimizer. The smallest $p$-frame potential then have the $k$ copies of an orthonormal basis.
\end{proof}

We now consider the $p$-frame potential for $N=d+1$. The case $p=2$ is covered by Theorem \ref{theorem:Benedetto Fickus}. Note that any FUNTF with $N=d+1$ vectors is equiangular~\cite{Goyal:2001aa,Strohmer:2003aa}. Hence, the case $2<p<\infty$ is settled by Proposition~\ref{prop:potential for p>2},  so we focus on $p\in (0,2)$. 

One easily verifies that, for $p_0=\frac{\log(\frac{d(d+1)}{2})}{\log(d)}$, an orthonormal basis plus one repeated vector and an equiangular FUNTF have the same $p_0$-frame potential $\FP_{p_{0}, d+1}$. Under the assumption that those two systems are exactly the minimizers of $\FP_{p_{0}, d+1}$, the next result will give a complete characterization of the minimizers of $\FP_{p, d+1}$, for $0<p<2$. However, we have only been able to establish the validity of this assumption when $d=2$, cf.~Corollary \ref{theorem:3 points}.
\begin{theorem}\label{theorem:d+1}
Let $N=d+1$ and set $p_{0}=\frac{\log(\frac{d(d+1)}{2})}{\log(d)}= \frac{\log(\frac{N(N-1)}{2})}{\log(N-1)}$. Let $\{x_i\}_{i=1}^N\subset S^{d-1}$, and assume that $\FP_{p_{0},N}(\{x_{i}\}_{i=1}^{N}) \geq N+2,$ with equality holds if and only if $\{x_i\}_{i=1}^N$ is an orthonormal basis plus one repeated vector or an equiangular FUNTF. Then,
\begin{itemize}
\item[\textnormal{(1)}] for $0<p<p_{0} $, then for any $\{x_i\}_{i=1}^N\subset S^{d-1}$, we have  $\FP_{p,N}(\{x_{i}\}_{i=1}^{N}) \geq N+2,$ and equality holds if and only if $\{x_i\}_{i=1}^N$ is an orthonormal basis plus one repeated vector,
\item[\textnormal{(2)}] for $p_{0}<p<2$, then for any $\{x_i\}_{i=1}^N\subset S^{d-1}$, we have $\FP_{p,N}(\{x_{i}\}_{i=1}^{N}) \geq 2^{\frac{p}{p_{0}}}\, (Nd)^{1-\frac{p}{p_{0}}} + N,$ 
and equality holds if and only if $\{x_i\}_{i=1}^N$ is an equiangular FUNTF.
\end{itemize}
\end{theorem}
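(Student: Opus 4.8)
The plan is to work with the off-diagonal part of the potential, $G_p := \FP_{p,N}(\{x_i\}_{i=1}^N) - N = \sum_{i\neq j}|\langle x_i,x_j\rangle|^p$, writing $c_{ij} := |\langle x_i,x_j\rangle| \in [0,1]$ and $M := N(N-1) = Nd$ (using $N-1=d$). The standing hypothesis is exactly $G_{p_0}\geq 2$, with $G_{p_0}=2$ precisely for the two extremal configurations. Before anything else I would record the values of $G_p$ on these two configurations: for an orthonormal basis with one repeated vector every $c_{ij}$ is $0$ except the single repeated pair, where it is $1$, so $G_p=2$ for all $p$; for an equiangular FUNTF with $N=d+1$, relation \eqref{eq:equi} gives $c_{ij}=1/d$, whence $G_p = M d^{-p}$, and the definition of $p_0$ is equivalent to $d^{p_0}=M/2$, so that $G_{p_0}=2$ and $G_p = 2^{p/p_0}M^{1-p/p_0}$. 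This last identity shows the bound in (2) is attained exactly by the equiangular FUNTF.

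For part (1), with $0<p<p_0$, the inequality is immediate from monotonicity: since $c_{ij}\in[0,1]$ and $p<p_0$ we have $c_{ij}^p\geq c_{ij}^{p_0}$ termwise, so $G_p\geq G_{p_0}\geq 2$ by hypothesis, i.e. $\FP_{p,N}\geq N+2$. For the equality case, $\FP_{p,N}=N+2$ forces both $G_p=G_{p_0}$ and $G_{p_0}=2$. The first, together with the termwise inequality $c_{ij}^p\geq c_{ij}^{p_0}$ (strict whenever $c_{ij}\in(0,1)$ since $p\neq p_0$), forces every $c_{ij}\in\{0,1\}$; the second invokes the hypothesis to restrict to the two extremal configurations. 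The equiangular FUNTF has $c_{ij}=1/d\notin\{0,1\}$ and is therefore excluded, leaving only the orthonormal basis plus a repeated vector.

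For part (2), with $p_0<p<2$, set $\alpha:=p/p_0>1$ and $d_{ij}:=c_{ij}^{p_0}$, so that $G_p=\sum_{i\neq j}d_{ij}^\alpha$ and $G_{p_0}=\sum_{i\neq j}d_{ij}$. Applying Jensen's inequality to the strictly convex map $t\mapsto t^\alpha$ over the $M$ off-diagonal indices gives $M^{1-\alpha}\big(\sum d_{ij}\big)^\alpha\leq\sum d_{ij}^\alpha$, that is $G_p\geq M^{1-\alpha}G_{p_0}^\alpha\geq 2^\alpha M^{1-\alpha}=2^{p/p_0}(Nd)^{1-p/p_0}$, where the last step uses $G_{p_0}\geq 2$ and $\alpha>1$. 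For equality, strict convexity forces all $d_{ij}$, hence all $c_{ij}$, to be equal (equiangularity), while $G_{p_0}^\alpha=2^\alpha$ forces $G_{p_0}=2$; the hypothesis then restricts to the two extremal configurations, and equiangularity now excludes the orthonormal-basis-plus-repeat (whose inner products take the two values $0$ and $1$), leaving exactly the equiangular FUNTF.

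The genuine difficulty of the statement is entirely absorbed into the standing hypothesis at $p_0$, which is the only nonelementary input and the part the authors can verify only for $d=2$; granting it, the argument is a short interpolation in $p$, with the monotonicity inequality $c^p\geq c^{p_0}$ handling $p<p_0$ and the power-mean (Jensen) inequality handling $p_0<p<2$. The one place that needs care is the equality analysis: in each regime both the relevant inequality and the hypothesis must be saturated simultaneously, and the correct extremal configuration is then singled out by whether its common inner-product value lies in $\{0,1\}$ or not. Verifying that the displayed bound in (2) is exactly the equiangular-FUNTF value, via $d^{p_0}=M/2$, is the only computation worth carrying out in detail.
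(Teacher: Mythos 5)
Your proposal is correct and follows essentially the same route as the paper: part (1) via termwise monotonicity of $c^{p}$ in $p$ anchored at the $p_{0}$-hypothesis, and part (2) via the power-mean (Jensen) inequality on the off-diagonal inner products, which is precisely the paper's H\"older step with $\frac{1}{p_{0}}=\frac{1}{p}+\frac{1}{r}$ in disguise. The one noteworthy (and welcome) refinement is your equality analysis in (2): where the paper concludes ``equiangular, hence $|\langle x_i,x_j\rangle|=\frac{1}{d}$, hence an equiangular FUNTF by \eqref{eq:equi}'' --- which tacitly uses the converse of \eqref{eq:equi} --- you instead route the conclusion through the saturation of the $p_{0}$-hypothesis and exclude the orthonormal-basis-plus-repeated-vector by equiangularity, so your argument leans only on the stated assumption.
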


\begin{proof}
Under the assumptions of the Theorem, let $0<p<p_0$, then 
\begin{equation*}
\min \FP_{p_{0}, N}(\{x_{i}\}_{i=1}^{N}) \leq \min \FP_{p, N}(\{x_{i}\}_{i=1}^{N}),
\end{equation*}
and using $\{y_{i}\}_{i=1}^{N}$ consisting of an orthonormal basis of $\R^d$ with one repeated vector, yields that 
\begin{equation*}
\min \FP_{p, N}(\{x_{i}\}_{i=1}^{N})\leq \FP_{p, N}(\{y_{i}\}_{i=1}^{N})=N+2=\min \FP_{p_{0}, N}(\{x_{i}\}_{i=1}^{N}). 
\end{equation*}
Consequently, $\min \FP_{p_{0}, N}(\{x_{i}\}_{i=1}^{N}) = \min \FP_{p, N}(\{x_{i}\}_{i=1}^{N})=N+2$. Since an orthonormal basis plus one repeated vector minimizes the $p$-frame potential for $p=p_{0}$, it must also minimize $\FP_{p, N}$ for  $0<p<p_{0}$, which proves $(1)$.

Assume now that $p_0 < p < 2$. Choose $r$ such that $\frac{1}{p_{0}}=\frac{1}{p}+\frac{1}{r}$, H\"older's  inequality yields
\begin{equation*}
\|(\langle x_i,x_j\rangle)_{i\neq j}\|_{\ell_{p_{0}}}^p \leq \|(\langle x_i,x_j\rangle)_{i\neq j}\|_{\ell_p}^p(N^{2}-N)^{p/r}.
\end{equation*}
Since we assume that $(2)$ holds, we have 
$ \sum_{i\neq j}|\langle x_i,x_j\rangle |^{p_{0}} \geq  2 $, which leads to 
\begin{align*}
 \sum_{i\neq j}|\langle x_i,x_j\rangle |^p & \geq  \bigg(\sum_{i\neq j}|\langle x_i,x_j\rangle |^{p_{0}}\bigg){\frac{p}{p_{0}}} (N^{2}-N)^{1-\frac{p}{p_{0}}}\\
 &\geq 2^{\frac{p}{p_{0}}}\, (Nd)^{1-\tfrac{p}{p_{0}}}.
\end{align*}
This concludes the proof of $(2)$. 
By applying \eqref{eq:equi}, one then checks that an equiangular FUNTF satisfies $(2)$ with equality.

The ``only'' part comes from the fact that the H\"older inequality becomes an equality  only if the sequences are linearly dependent. This means that the $\{x_i\}_{i=1}^N$ are equiangular. They must then satisfy $|\langle x_i,x_j\rangle | = \frac{1}{d}$. Thus, by~\eqref{eq:equi}  they form an equiangular FUNTF \cite{Casazza:2008ab,Sustik:2007aa}.
\end{proof}

When $d=2$ we can in fact verify the main hypothesis of Theorem~\ref{theorem:d+1}, which leads to the following result:

\begin{corollary}\label{theorem:3 points}
Let $\{x_i\}_{i=1}^3\subset S^{1}$, and set $p_{0}=\frac{\log(3)}{\log(2)}$. Then, $$\FP_{p_{0},3}(\{x_{i}\}_{i=1}^{3}) \geq 5,$$ and equality holds if and only if $\{x_i\}_{i=1}^3$ is an orthonormal basis plus one repeated vector or an equiangular FUNTF. 

Consequently,
\begin{itemize}
\item[\textnormal{(1)}] for $0<p<p_{0} $, then for any $\{x_i\}_{i=1}^3\subset S^{1}$, we have $\FP_{p,3}(\{x_{i}\}_{i=1}^{3}) \geq 5,$ and equality holds if and only if $\{x_i\}_{i=1}^3$ is an orthonormal basis plus one repeated vector,
\item[\textnormal{(2)}] for $p_{0}<p<\infty$, then for any $\{x_i\}_{i=1}^3\subset S^{1}$, we have $\FP_{p,3}(\{x_{i}\}_{i=1}^{3}) \geq \frac{6}{2^{p}} +3,$ 
and equality holds if and only if $\{x_i\}_{i=1}^3$ is an equiangular FUNTF.
\end{itemize}
\end{corollary}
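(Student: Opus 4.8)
The plan is to reduce everything to verifying the standing hypothesis of Theorem~\ref{theorem:d+1} in the case $d=2$, $N=3$; once the base inequality $\FP_{p_{0},3}(\{x_{i}\}_{i=1}^{3}) \geq 5$ is established together with its equality characterization, statements (1) and (2) follow immediately by specializing Theorem~\ref{theorem:d+1} to $N=d+1=3$, $p_{0}=\log(3)/\log(2)$, where $N+2=5$. For the range $p_{0}<p<\infty$ in (2) I would combine Theorem~\ref{theorem:d+1}(2) on $(p_{0},2]$ with Proposition~\ref{prop:potential for p>2} on $(2,\infty)$, after checking that both yield the single expression $\tfrac{6}{2^{p}}+3$: indeed for an equiangular FUNTF in $\R^{2}$ one has $|\langle x_{i},x_{j}\rangle|=\tfrac12$, so $\FP_{p,3}=3+6(\tfrac12)^{p}$, and the bound $2^{p/p_{0}}(Nd)^{1-p/p_{0}}+N$ of Theorem~\ref{theorem:d+1}(2) collapses to the same thing precisely because $3^{1/p_{0}}=2$. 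So the whole Corollary hinges on the base case.

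To prove the base case I would first write $\FP_{p_{0},3}=3+2\sum_{i<j}|\langle x_{i},x_{j}\rangle|^{p_{0}}$, so that the claim becomes $\sum_{i<j}|\langle x_{i},x_{j}\rangle|^{p_{0}}\geq 1$. Next I would use rotation invariance to regard the three unit vectors as three lines through the origin, i.e.\ three points on the projective circle of circumference $\pi$; the three pairwise absolute inner products are then exactly $|\cos s_{1}|,|\cos s_{2}|,|\cos s_{3}|$, where $s_{1},s_{2},s_{3}\geq 0$ are the three consecutive gaps and $s_{1}+s_{2}+s_{3}=\pi$. Since at most one gap can exceed $\pi/2$, I would reduce to the ``acute'' region $s_{i}\in[0,\pi/2]$ (checking by a direct estimate that any configuration with one $s_{i}>\pi/2$ already gives potential $>1$, except at the shared boundary), and then study
\begin{equation*}
\Phi(s_{1},s_{2},s_{3})=\cos^{p_{0}}s_{1}+\cos^{p_{0}}s_{2}+\cos^{p_{0}}s_{3},\qquad s_{i}\in[0,\tfrac{\pi}{2}],\ \sum_{i}s_{i}=\pi,
\end{equation*}
aiming to show $\min\Phi=1$ with the minimum attained exactly at the two target configurations.

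I would attack the minimization by separating boundary and interior. On the boundary a coordinate equals $0$ or $\pi/2$; in either case the constraint forces the problem onto an edge of the form $\cos^{p_{0}}s+\sin^{p_{0}}s=(\cos^{2}s)^{p_{0}/2}+(\sin^{2}s)^{p_{0}/2}\geq(\cos^{2}s+\sin^{2}s)^{p_{0}/2}=1$, by subadditivity of $t\mapsto t^{p_{0}/2}$ (valid since $p_{0}/2<1$), with equality only at the endpoints. These endpoints are the permutations of $(\tfrac{\pi}{2},\tfrac{\pi}{2},0)$, which are exactly the orthonormal basis plus one repeated vector, and there $\Phi=1$. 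For the interior, writing $\varphi(s)=\cos^{p_{0}}s$ and noting that $\varphi'$ is negative on $(0,\pi/2)$ and (being concave-then-convex there) assumes each value at most twice, the Lagrange condition $\varphi'(s_{i})=\mu$ forces the critical $s_{i}$ to take at most two distinct values: either the equilateral point $s_{1}=s_{2}=s_{3}=\pi/3$, giving $3\cdot(\tfrac12)^{p_{0}}=1$ exactly because $2^{p_{0}}=3$ (this is the equiangular FUNTF), or an isosceles family $(\alpha,\alpha,\pi-2\alpha)$ with $\alpha\in(\pi/4,\pi/2)$.

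The main obstacle is the isosceles family: I must show the single-variable function $G(\alpha)=2\cos^{p_{0}}\alpha+|\cos 2\alpha|^{p_{0}}$ stays $\geq 1$ on $[\pi/4,\pi/2]$, with equality only at the equilateral value $\alpha=\pi/3$ and in the degenerate limits. This is exactly the delicate point that makes $p_{0}$ special: $p_{0}=\log_{2}3$ is precisely the exponent at which the ``spread-out'' equilateral value $3\cdot2^{-p_{0}}$ coincides with the ``degenerate'' value $1$, so for this one exponent both families of minimizers tie at $\Phi=1$ — which is why the hypothesis of Theorem~\ref{theorem:d+1} can be confirmed here but must be assumed for $d>2$. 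I expect to finish the $G(\alpha)\geq 1$ check by a monotonicity/convexity argument on $[\pi/4,\pi/2]$ (or equivalently by showing the equilateral critical point is the only interior minimizer). Combining the boundary and interior analyses gives $\min\Phi=1$ attained exactly at the orthonormal-basis-plus-repeat configurations and at the equilateral (equiangular FUNTF) configuration, which is the desired base case, and the Corollary then follows from Theorem~\ref{theorem:d+1} and Proposition~\ref{prop:potential for p>2} as described.
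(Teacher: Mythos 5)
Your overall architecture is sound and in fact closely parallels the paper's: establish the base case $\FP_{p_0,3}\geq 5$ with its equality characterization, feed it into Theorem~\ref{theorem:d+1}, and cover $2< p<\infty$ in part (2) via Proposition~\ref{prop:potential for p>2}; your verification that both bounds collapse to $6\cdot 2^{-p}+3$ because $3^{1/p_0}=2$ is correct and is a detail the paper's own proof glosses over. Your treatment of the base case is also structurally right, and in places cleaner than the paper's: the symmetric gap parametrization $s_1+s_2+s_3=\pi$, the subadditivity bound $(\cos^2 s)^{p_0/2}+(\sin^2 s)^{p_0/2}\geq 1$ on the boundary, and the dispatching of the obtuse region (which can indeed be done via $\cos^{p_0}t\geq\cos^2 t$ together with $\cos^2 s_1+\cos^2 s_2+\cos^2(s_1+s_2)=1+2\cos s_1\cos s_2\cos(s_1+s_2)$) replace the paper's asymmetric two-angle case analysis. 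One small slip: ``concave-then-convex'' is not the property that guarantees $\varphi'$ takes each value at most twice (a concave-then-convex function can take a value four times); what you need, and what holds here, is that $\varphi'(s)=-p_0\cos^{p_0-1}(s)\sin(s)$ is unimodal (decreasing then increasing) on $(0,\pi/2)$, which follows from a one-line computation of where $\cos^{p_0-1}(s)\sin(s)$ has its unique interior extremum.

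The genuine gap is the step you defer: proving $G(\alpha)=2\cos^{p_0}\alpha+|\cos 2\alpha|^{p_0}\geq 1$ on $[\pi/4,\pi/2]$ with equality only at $\alpha=\pi/3$ and $\alpha=\pi/2$. You propose to finish ``by a monotonicity/convexity argument,'' but no such argument can exist: $G$ equals $1$ at the two distinct points $\pi/3$ and $\pi/2$ and is strictly larger in between (it has a strict interior local maximum, of size roughly $1.03$ near $\alpha\approx 0.44\pi$), so $G$ is neither monotone nor convex on the relevant interval, and any convexity-type bound is contradicted by this bump. Your fallback, ``showing the equilateral critical point is the only interior minimizer,'' is a restatement of the claim, not an argument. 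This one-variable minimization is precisely where the paper expends most of its effort: differentiate $f(\alpha)=2\cos^{p}\alpha+(-\cos 2\alpha)^{p}$, substitute $x=\cos\alpha$ to reduce the critical-point equation to $x^{p-1}=(1-2x^2)^{p-1}2x$, equivalently to the vanishing of $g(x)=x^{q-1}-2x^{q+1}-(1/2)^{q}$ with $q=1/(p-1)$, then show that $g'$ has exactly one zero on $(0,1/2]$, so $g$ has at most two zeros there; one of them is $x=1/2$ (i.e.\ $\alpha=\pi/3$) and the other corresponds to a local maximum of $f$, whence the minimum over the isosceles family is attained only at $\alpha=\pi/3$ and $\alpha=\pi/2$, both giving the value $1$. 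Without this zero-counting argument (or an equivalent substitute), your proof of the base inequality --- and hence of the Corollary, including the equality characterization on which parts (1) and (2) rest --- is incomplete at exactly its most delicate point.
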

The minimum of $\FP_{p,3}$, for $0<p<\infty$ is plotted in Figure \ref{figure:min of 2d 3 vectors}. 
 \begin{figure}
 \centering
  \includegraphics[width=.35\textwidth]{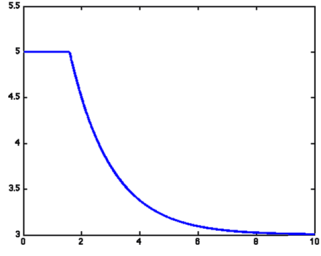}\hspace{1ex}
 \caption{Minimum of $\FP_{p,3}$ in Corollary \ref{theorem:3 points}, for $0<p<10$.}\label{figure:min of 2d 3 vectors}
 \end{figure}
\begin{proof}
Clearly  $(1)$ and $(2)$ follows from Theorem~\ref{theorem:d+1} once the minimizers of $\FP_{p_{0}, N}$ are characterized.  

Without loss of generality, let $\beta$ be the smallest angle between $x_1$, $x_2$, and $x_3$ and let $\alpha$ be the second smallest angle between them. This yields, of course, $0\leq \beta\leq \alpha$.

Case 1: For $0\leq  \alpha+\beta\leq\frac{\pi}{2}$, we have 
\begin{equation*}
\frac{1}{2}\sum_{i\neq j}|\langle x_i,x_j\rangle|^p = \cos^p(\alpha) +\cos^p(\beta)+\cos^p(\alpha+\beta)=F(\alpha, \beta).
\end{equation*} 
Since $1<p_{0}$, the $p$-frame potential is differentiable in $\alpha$ and $\beta$, and its critical points are 
\begin{align*}
0 & =-p\cos^{p-1}(\alpha)\sin(\alpha)-p\cos^{p-1}(\alpha+\beta)\sin(\alpha+\beta)\\
0 & = -p\cos^{p-1}(\beta)\sin(\beta)-p\cos^{p-1}(\alpha+\beta)\sin(\alpha+\beta).
\end{align*}
This implies that either $\alpha=\beta=0$ or $\beta=0$ and $\alpha=\frac{\pi}{2}$. In the first case, we have a maximum since it implies $x_1=x_2=x_3$. The latter case means that two points are identical and the third one is perpendicular which is a potential minimum of the $p$-frame potential. 

Case 2: We can assume that $\frac{\pi}{4}\leq \alpha$ (otherwise we are in Case 1). We can further assume that $\frac{\pi}{4}\leq \alpha \leq \frac{\pi}{2}\leq \alpha+\beta\leq \frac{2\pi}{3}$ (if $\frac{2\pi}{3}<\alpha+\beta\leq\pi$. Otherwise, substitute $x_i$ with $-x_i$.
We now have 
\begin{equation*}
\frac{1}{2}\sum_{i\neq j}|\langle x_i,x_j\rangle|^p =\cos^p(\alpha) +\cos^p(\beta)+(-\cos(\alpha+\beta))^p=G(\alpha, \beta).
\end{equation*} 
The critical points of $G$ are given by
\begin{align*}
0 & =-p\cos^{p-1}(\alpha)\sin(\alpha)+p(-\cos(\alpha+\beta))^{p-1}\sin(\alpha+\beta)\\
0 & = -p\cos^{p-1}(\beta)\sin(\beta)+p(-\cos(\alpha+\beta))^{p-1}\sin(\alpha+\beta).
\end{align*}
By subtracting one equation from the other and raising to the second power, we obtain
 \begin{equation*}
 z_1^{p-1}(1-z_1) = z_2^{p-1}(1-z_2),
 \end{equation*}
 where $z_1=\cos(\alpha)^2$ and $z_2=\cos^2(\alpha+\beta)$. Since $\sin^2(x)-1/2\geq \cos^2(x)$ for all $\pi/2\leq x\leq 2\pi/3$, we have $0\leq z_1\leq 1/2$ and $0\leq z_2\leq 1/4$ because $z_2\leq 1-z_2-1/2$.
 
 We consider the function $F(z)=z^{p-1}(1-z)$ on $0\leq z\leq 1/2$. $F$ achieves its maximum at $z=\frac{p-1}{p}\approx 0.3691$ and is convex, cf.~Figure \ref{figure:1}.%
 \begin{figure}
 \centering
 \subfigure[$F(z)=z^{p-1}(z-1)$, $0\leq z\leq \frac{1}{2}$]{\hspace{1ex}
 \includegraphics[width=.35\textwidth]{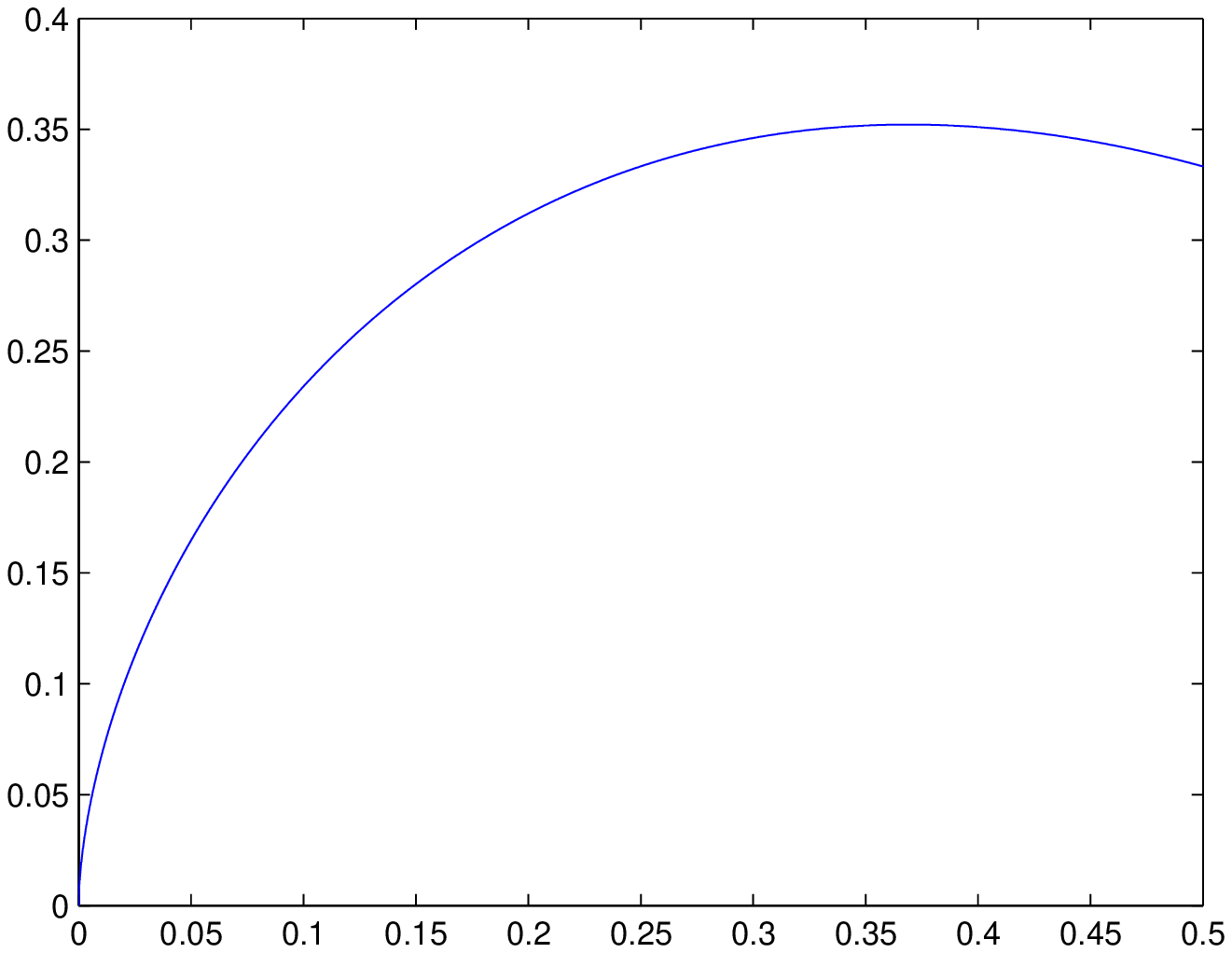}\hspace{1ex}}
 \hspace{2ex}
 \subfigure[$f(\alpha) =  2\cos^p(\alpha) + (-\cos(2\alpha))^p$, $\frac{\pi}{3}\leq \alpha\leq \frac{\pi}{2}$]{\hspace{7ex}
 \includegraphics[width=.35\textwidth]{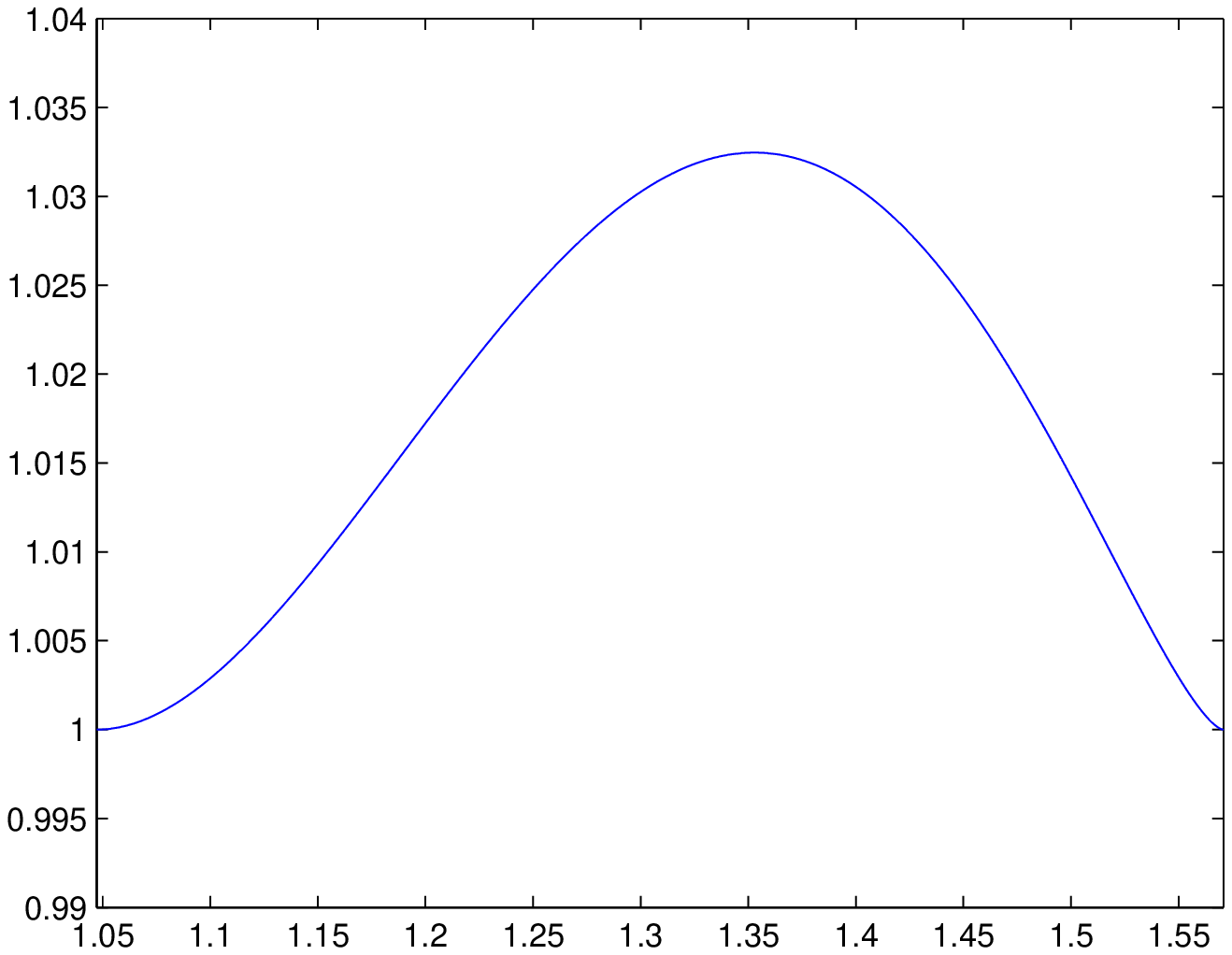}\hspace{7ex}}
 \caption{$F$ and $f$ in the proof of Theorem \ref{theorem:3 points}}\label{figure:1}
 \end{figure}
 Therefore, for $0\leq z_1\leq 1/2$ and $0\leq z_2\leq 1/4$, we have $F(z_1)=F(z_2)$ if and only if $z_1=z_2$ or $z_1=1/2$ and $z_2=1/4$. For $z_1=\cos^2(\alpha)=1/2$, we have $\alpha=\pi/3$ and $z_2=\cos^2(\pi/3+\beta)=1/4$ yields $\beta=\pi/3$. The case $z_1=z_2$ leads to $\cos^2(\alpha)=\cos^2(\alpha+\beta)$ which implies $\alpha+\beta-\pi/2 = \pi/2-\alpha$. This is equivalent to $\beta=\pi-2\alpha$. Since we assume $\beta\leq \alpha$, we obtain $\pi/3\leq \alpha \leq \pi/2$. 

We now check the minima of the function
 \begin{align*}
f(\alpha)  & =  \cos^p(\alpha)+\cos^p(\pi-2\alpha)+(-\cos(\pi-\alpha))^p\\
& = 2\cos^p(\alpha) + (-\cos(2\alpha))^p,
 \end{align*}
 on $\pi/3\leq\alpha\leq \pi/2$, cf.~Figure \ref{figure:1}. Its derivative 
 \begin{equation*}
 \frac{\partial f}{\partial \alpha} (\alpha) = -2p\cos^{p-1}(\alpha)\sin(\alpha) + 2p(-\cos(2\alpha))^{p-1}\sin(2\alpha)
 \end{equation*}
vanishes if and only if 
 \begin{equation*}
 \cos^{p-1}(\alpha)\sin(\alpha) = (\sin^2(\alpha)-\cos^2(\alpha))^{p-1}2\cos(\alpha)\sin(\alpha)
 \end{equation*}
 For $\alpha\neq \pi/2$, this yields
  \begin{equation*}
 \cos^{p-1}(\alpha) = (1-2\cos^2(\alpha))^{p-1}2\cos(\alpha).
 \end{equation*}
 By substituting $x=\cos(\alpha)$, we obtain, for $0<x\leq 1/2$
  \begin{equation*}
 x^{p-1} = (1-2x^2)^{p-1}2x,
 \end{equation*}
which is equivalent to 
  \begin{equation*}
 1/2 = (1/x-2x)^{p-1}x \Leftrightarrow
 0 = (1/x-2x)x^{1/(p-1)} -(1/2)^{1/(p-1)}
 \end{equation*}
and define the new function 
   \begin{equation*}
 g(x) = -2x^{q+1}+x^{q-1} -(1/2)^{q},
 \end{equation*}
 where $q=1/(p-1)$. To show that $g$ has only one extremal point on $0<x\leq 1/2$, we differentiate
   \begin{equation*}
  \frac{\partial g}{\partial x}(x) = -2(q+1)x^q+(q-1)x^{q-2}.
 \end{equation*}
 The term $ \frac{\partial g}{\partial x}(x)$ vanishes if and only if 
 \begin{equation*}
 (q-1)x^{q-2} = 2(q+1)x^q,
 \end{equation*}
which is equivalent to
 \begin{equation*}
 x^2 = \frac{q-1}{2(q+1)} =\frac{2-\log_2(3)}{2\log_2(3)}.
 \end{equation*}
 Hence, $x\approx 0.3618$ and $ \frac{\partial g}{\partial x}$ does not have any other zeros on $0<x\leq 1/2$. This means that $g$ has only one extremal point and can then only have two zeros on $0<x\leq 1/2$. The zero at $x=1/2$ corresponds to a minimum. This means that the zero of $ \frac{\partial g}{\partial x}$ at $x\approx 0.3618$ is a maximum of $g$. Hence the other zero of g is between $0$ and $\approx 0.3618$. However, this other zero corresponds to a maximum of $f$. The minimum of $f$ can thus be at $x=0$ or $x=1/2$. This implies $\alpha=\pi/3$ or $\alpha=\pi/2$. It is easy to verify that $\alpha=\pi/3$ would lead to $\beta=\pi/3$, and $\alpha=\pi/2$ yields $\beta=0$. Thus, the minimum of the $p$-frame potential corresponds to either an orthonormal basis plus one repeated element ($\alpha=\pi/2$, $\beta=0$) or an equiangular FUNTF ($\alpha=\beta=\pi/3$). One easily checks that both situations lead to the same global minimum.
 
\end{proof}

In view of Theorem \ref{theorem:d+1} and Corollary \ref{theorem:3 points}, we have the following conjecture: 
\begin{conjecture}\label{conjecture:p_0}
Let $N=d+1$  and $p_0=\frac{\log(\frac{d(d+1)}{2})}{\log(d)}$. Then $$\FP_{p_{0},N}(\{x_{k}\}_{k=1}^{N}) \geq N+2,$$ and equality holds if and only if $\{x_i\}_{i=1}^N$ is an orthonormal basis plus one repeated vector or an equiangular FUNTF.
\end{conjecture}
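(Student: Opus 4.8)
The plan is to reduce the statement to the off-diagonal bound $\sum_{i\neq j}|\langle x_i,x_j\rangle|^{p_0}\geq 2$, since the $N$ diagonal terms contribute exactly $N$ to $\FP_{p_0,N}$. I would first record that $p_0$ is calibrated precisely so that the two candidate configurations tie. An orthonormal basis with one repeated vector has exactly two off-diagonal inner products of modulus $1$ and all others $0$, so $\sum_{i\neq j}|\langle x_i,x_j\rangle|^{p_0}=2$; an equiangular FUNTF has $|\langle x_i,x_j\rangle|=1/d$ for every $i\neq j$ by \eqref{eq:equi}, whence
$$\sum_{i\neq j}|\langle x_i,x_j\rangle|^{p_0}=d(d+1)\,d^{-p_0}=2,$$
the last equality being exactly the defining relation $d^{p_0}=d(d+1)/2$. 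So the real content is that no configuration undercuts this common value $2$.

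Next I would reformulate through the Gram matrix $G=\big(\langle x_i,x_j\rangle\big)_{i,j=1}^{N}$, which is positive semidefinite with unit diagonal. Since $N=d+1$ unit vectors in $\R^d$ are linearly dependent, $G$ has rank at most $d=N-1$ and is therefore singular. Choosing a unit null vector $c$ with $Gc=0$ gives $\sum_{i\neq j}\langle x_i,x_j\rangle\,c_ic_j=-\|c\|^2=-1$, hence $1\leq\sum_{i\neq j}|\langle x_i,x_j\rangle|\,|c_i|\,|c_j|$. Applying H\"older's inequality with exponents $p_0$ and $q_0=p_0/(p_0-1)$ then yields
$$\sum_{i\neq j}|\langle x_i,x_j\rangle|^{p_0}\;\geq\;B(c)^{-p_0},\qquad B(c)=\Big(\sum_{i\neq j}(|c_i|\,|c_j|)^{q_0}\Big)^{1/q_0}.$$
A short computation shows this bound is sharp at both extremizers: the null vector of the orthonormal-plus-repeat system is supported on two coordinates, and that of the equiangular FUNTF has entries of equal modulus, so in either case $B(c)^{-p_0}=2$. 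Since $B(c)$ depends only on $(|c_i|)$, this strongly suggests the reformulation isolates the correct mechanism and independently reproduces the exponent $p_0$.

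The obstacle is that $B(c)^{-p_0}\geq 2$ fails for general $c$. Indeed, for $c$ uniform on $m$ of its $N$ coordinates one finds $B(c)^{-p_0}=m\,(m-1)^{1-p_0}$, which equals $2$ at the endpoints $m=2$ and $m=N=d+1$ but drops strictly below $2$ for every intermediate $m$ once $d>2$. Thus a single null vector does not suffice in the regime where the configuration interpolates between the two extremizers, and this is exactly the intermediate regime left open for $d>2$. To close it one would have to feed in extra structure — for instance the full positive-semidefinite and rank information in $G$, the second-moment bound $\sum_{i\neq j}|\langle x_i,x_j\rangle|^2\geq N^2/d-N$ from Theorem~\ref{theorem:Benedetto Fickus} (which forces additional off-diagonal mass precisely when $c$ is spread out), or a direct critical-point analysis generalizing the angle computation of Corollary~\ref{theorem:3 points}. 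Uniform control of this intermediate regime across all dimensions is the crux, and is precisely what prevents the conjecture from being upgraded to a theorem for $d>2$.
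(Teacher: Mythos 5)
Your proposal does not prove the statement, but you are candid about this, and in fact no proof exists in the paper either: the statement is precisely the paper's Conjecture~\ref{conjecture:p_0}, which the authors leave open for $d>2$ and verify only for $d=2$ (Corollary~\ref{theorem:3 points}), where it is established by parametrizing three points on $S^{1}$ by angles and carrying out an explicit critical-point analysis. Your preliminary reductions are correct and match the paper's framing: the diagonal contributes exactly $N$, and both candidate configurations give off-diagonal sum $2$ --- your calibration $d^{p_{0}}=d(d+1)/2$ is exactly why $p_{0}$ is chosen this way (cf.\ the discussion preceding Theorem~\ref{theorem:d+1}) --- so the content is indeed $\sum_{i\neq j}|\langle x_i,x_j\rangle|^{p_{0}}\geq 2$. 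Your Gram-matrix route is genuinely different from anything in the paper: since $N=d+1$, the Gram matrix is singular, a unit null vector $c$ gives $\sum_{i\neq j}|\langle x_i,x_j\rangle|\,|c_i|\,|c_j|\geq 1$, and H\"older then yields $\sum_{i\neq j}|\langle x_i,x_j\rangle|^{p_{0}}\geq B(c)^{-p_{0}}$; this bound is dimension-free and, as you check, sharp at both extremizers (a two-coordinate $c$ for the orthonormal-basis-plus-repeat, a constant-modulus $c$ for the simplex).

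The gap is the one you diagnose yourself: $B(c)^{-p_{0}}\geq 2$ fails for intermediate null vectors. Your formula $B(c)^{-p_{0}}=m(m-1)^{1-p_{0}}$ for $c$ uniform on $m$ coordinates is correct, and for $d=3$, $m=3$ it gives $3\cdot 2^{1-\log_{3}6}\approx 1.94<2$, so the single-null-vector inequality cannot close the argument for any $d>2$. To be precise about the logic: this does not refute the conjecture --- a configuration whose Gram null space contains such a spread-out $c$ carries additional positive-semidefiniteness and rank constraints that should force extra off-diagonal mass, as your appeal to $\sum_{i\neq j}|\langle x_i,x_j\rangle|^{2}\geq N^{2}/d-N$ from Theorem~\ref{theorem:Benedetto Fickus} suggests --- it only shows that your relaxation discards too much information. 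Since the paper's sole technique for this statement (the $d=2$ angle analysis) also does not generalize, your proposal should be read as a correct partial reduction together with a correct identification of the hard intermediate regime, not as a proof; the statement remains open for $d>2$, consistent with its status as a conjecture in the paper.
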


One can check that $1<p_0<2$, for $d>1$. According to Proposition \ref{prop:potential for p>2}, the minimizers of the $p$-frame potential for $2<p<\infty$ are exactly the equiangular FUNTFs. Thus, our conjecture essentially addresses the range $0<p<2$. 

\begin{remark}
Although we are primarily interested in real unit norm vectors $\{x_i\}_{i=1}^N\subset \R^d$, we should mention that Theorem \ref{theorem:Benedetto Fickus}, Theorem~\ref{prop:k multiples and small p}, Equation \eqref{eq:Welch}, and Propositions \ref{prop:potential for p>2} and \ref{prop:second one} still hold for complex vectors $\{z_i\}_{i=1}^N\subset \C^d$ that have unit norm. The constraints on $N$ and $d$ that allow for the existence of a complex FUNTF are slightly weaker than in the real case~\cite{Sustik:2007aa}.
\end{remark}

\section{The probabilistic $p$-frame potential}\label{section:intro prob}

The present section is dedicated to introducing a probabilistic version of the previous section. We shall consider probability distributions on the sphere rather than finite point sets. Let $\mathcal{M}(S^{d-1},\mathcal{B})$ denote the collection of probability distributions on the sphere with respect to the Borel sigma algebra $\mathcal{B}$. 

We begin by introducing the probabilistic $p$-frame which generalizes the notion of probabilistic frames introduced in~\cite{Ehler:2010aa}. 

\begin{definition}\label{probpframe}
For $0<p<\infty$, we call $\mu\in\mathcal{M}(S^{d-1},\mathcal{B})$ a \emph{probabilistic $p$-frame} for $\R^d$ if and only if there are constants $A,B>0$ such that
 \begin{equation}\label{ppframeineq}
A\|y\|^p\leq  \int_{S^{d-1}} |\langle x,y\rangle|^p d\mu(x) \leq B\|y\|^p, \quad\forall y\in\R^d.
 \end{equation}
 We call $\mu$ a \emph{tight probabilistic $p$-frame} if and only if we can choose $A=B$.
 \end{definition}
 Due to Cauchy-Schwartz, the upper bound $B$ always exists. 
Consequently, in order to check that $\mu$ is a probabilistic $p$-frame one only needs to focus on the lower bound $A$. 

 Since the uniform surface measure $\sigma$ on $S^{d-1}$ is invariant under orthogonal transformations, one can easily check that it constitutes a tight probabilistic $p$-frame, for any $0<p<\infty$.

Given a probability measure $\mu \in \mathcal{M}(S^{d-1},\mathcal{B})$, we call 
\begin{equation*}
F : \R^d \rightarrow L_p(S^{d-1},\mu),\quad x\mapsto \langle x,\cdot\rangle 
\end{equation*}
the \emph{analysis operator}. It is trivially seen that $$\|F(y)\|_{L_{p}(S^{d-1}, \mu)}=\|\langle x,y\rangle\|_{L_{p}(S^{d-1}, \mu)} \leq \|y\|$$ for all $0<p\leq \infty$. The dual of $F$ is called \emph{synthesis operator} and is given by
\begin{equation*}
F^* : L_q(S^{d-1},\mu) \rightarrow \R^d,\quad f\mapsto \int_{S^{d-1}} f(x)xd\mu(x),
\end{equation*}
where $1=\frac{1}{p}+\frac{1}{q}$, and $1\leq p \leq \infty$. In fact, $F^*$ is well-defined and bounded operator on all $L_{r}(S^{d-1}, \mu)$ where 
$1\leq r \leq \infty$. Indeed, for  $f \in L_{r}(S^{d-1}, \mu)$ we have 
 $$
\|F^{*}(f)\|\leq \int_{S^{d-1}}|f(x)|\|x\|d\mu(x)= \int_{S^{d-1}}|f(x)|d\mu(x)\leq \|f\|_{L_{r}}.$$ 
Given $\mu \in \mathcal{M}(S^{d-1},\mathcal{B})$, the second moments matrix of $\mu$ is the $d\times d$ matrix defined by 
\begin{equation}\label{ppfoperator}
S=F^* F = \bigg(\int_{S^{d-1}} x_i x_j d\mu(x) \bigg)_{i,j}
\end{equation}
with respect to the canonical basis for $\R^d$. As we show next, the second moments matrix $S$ plays a key role in determining if $\mu$ is a probabilistic $p$-frame. We refer to \cite{Christensen:2003ab}, where a  result was proved for similar discrete $p$-frames.

\begin{proposition}\label{pfonto}
If $1\leq p <\infty$, then $\mu \in \mathcal{M}(S^{d-1},\mathcal{B})$ is a probabilistic $p$-frame if and only if $F^*$ is onto. 
\end{proposition}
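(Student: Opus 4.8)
The plan is to prove both implications by exploiting the relationship between the lower frame bound in \eqref{ppframeineq} and the surjectivity of $F^*$. The key algebraic fact, valid for all $1 \le p < \infty$, is that $F^*$ is onto $\R^d$ if and only if its range is not contained in any proper subspace of $\R^d$, which in turn is equivalent to saying that there is no nonzero $y \in \R^d$ orthogonal to $\range(F^*)$. So my first step is to translate the geometric condition ``$F^*$ onto'' into a statement about vectors $y$ in the kernel of $F$ (the pre-adjoint), using the identity $\langle F^* f, y \rangle = \langle f, Fy \rangle = \int_{S^{d-1}} f(x) \langle x, y\rangle \, d\mu(x)$.

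\medskip

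\noindent \textbf{Necessity ($\Rightarrow$).} Suppose $\mu$ is a probabilistic $p$-frame, so the lower bound $A > 0$ holds in \eqref{ppframeineq}. I would argue by contraposition: if $F^*$ were \emph{not} onto, then $\range(F^*)$ is a proper subspace, so there exists a nonzero $y_0 \in \R^d$ orthogonal to $\range(F^*)$. For such $y_0$ one has $\langle F^* f, y_0\rangle = 0$ for every $f \in L_q(S^{d-1},\mu)$, equivalently $\int_{S^{d-1}} f(x)\langle x, y_0\rangle \, d\mu(x) = 0$ for all $f$. Choosing $f(x) = \sgn(\langle x,y_0\rangle)|\langle x,y_0\rangle|^{p-1}$ (which lies in $L_q$ since $\langle x,y_0\rangle$ is bounded), this forces $\int_{S^{d-1}} |\langle x,y_0\rangle|^p \, d\mu(x) = 0$, contradicting the lower frame inequality $A\|y_0\|^p \le \int |\langle x,y_0\rangle|^p\, d\mu(x)$ with $A>0$ and $y_0 \ne 0$. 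Hence $F^*$ must be onto.

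\medskip

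\noindent \textbf{Sufficiency ($\Leftarrow$).} Now suppose $F^*$ is onto. The function $y \mapsto \big(\int_{S^{d-1}} |\langle x,y\rangle|^p \, d\mu(x)\big)^{1/p} =: \|Fy\|_{L_p}$ is continuous and positively homogeneous of degree one, so the lower bound in \eqref{ppframeineq} is equivalent to showing this function is strictly positive on the compact unit sphere $S^{d-1} \subset \R^d$; its minimum over $S^{d-1}$ then furnishes a valid constant $A > 0$. Suppose for contradiction that $\|Fy_0\|_{L_p} = 0$ for some unit vector $y_0$; then $\langle x, y_0\rangle = 0$ for $\mu$-a.e.\ $x$, which makes $y_0$ orthogonal to $\range(F^*) = \{\int f(x)x\,d\mu(x)\}$, so $\range(F^*)$ lies in the hyperplane $y_0^\perp$ and $F^*$ cannot be onto — a contradiction. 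Thus no such $y_0$ exists, the minimum is positive, and the lower bound holds.

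\medskip

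\noindent I expect the main obstacle to be the necessity direction, specifically justifying that the chosen test function $f(x) = \sgn(\langle x,y_0\rangle)|\langle x,y_0\rangle|^{p-1}$ genuinely belongs to $L_q(S^{d-1},\mu)$ so that it is a legitimate input to $F^*$; this is where the range restriction $1 \le p < \infty$ matters, since it guarantees the conjugate exponent $q$ is well-defined and the pairing $\langle F^*f, y_0\rangle = \int f \langle x,y_0\rangle\,d\mu$ is finite and equals $\int |\langle x,y_0\rangle|^p\,d\mu$. The boundedness of $\langle x,y_0\rangle$ on the sphere makes this routine, but the duality bookkeeping between $L_p$ and $L_q$ should be stated carefully rather than waved through.
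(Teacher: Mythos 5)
Your proof is correct, but it does not simply reproduce the paper's argument. Your necessity direction ($p$-frame $\Rightarrow F^*$ onto) is essentially the paper's contradiction scheme, carried out more completely: where the paper merely says ``a contradiction argument leads to $\langle x,z\rangle=0$,'' you exhibit the test function $f=\operatorname{sgn}(\langle \cdot,y_0\rangle)\,|\langle \cdot,y_0\rangle|^{p-1}\in L_q(S^{d-1},\mu)$ and then invoke the lower frame bound to force $y_0=0$; that is exactly the bookkeeping the paper leaves implicit (and the honest conclusion is $\langle x,y_0\rangle=0$ only $\mu$-a.e., so it is indeed the frame inequality, not sphere geometry, that must finish the job --- your version gets this right). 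Your sufficiency direction, however, takes a genuinely different route. The paper goes through the second moments matrix: $F^*$ onto forces $F$ injective, hence $S=F^*F$ is invertible, and the duality/H\"older chain $\|y\|^{2}=|\langle Fy,FS^{-1}y\rangle|\leq \|Fy\|_{L_p}\,\|FS^{-1}y\|_{L_{p'}}\leq \|S^{-1}\|\,\|Fy\|_{L_p}\,\|y\|$ yields the lower frame bound with the explicit constant $A=\|S^{-1}\|^{-p}$. You instead minimize $y\mapsto\|Fy\|_{L_p}$ over the compact unit sphere and rule out a zero minimum by the same orthogonality observation; this is more elementary and bypasses $S$ entirely, at the cost of a non-constructive constant and of the continuity claim you assert without proof --- it does hold, since for $p\geq 1$ Minkowski's inequality together with $\|F(y-y')\|_{L_p}\leq\|y-y'\|$ (valid because $\mu$ is a probability measure on the sphere) makes $y\mapsto\|Fy\|_{L_p}$ $1$-Lipschitz, and this is precisely where your argument, like the paper's, uses the restriction $p\geq 1$.
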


\begin{proof}
As mentioned earlier the upper bound in the probabilistic $p$-frame definition always holds. So we only need to show the equivalence between the lower bound and the surjectivity of $F^{*}$. 

Assume that $F^{*}$ is surjective. Since $F$ is injective, then $S$ is invertible and for each $y \in \R^d$ we have
$$\|y\|^{2}=|\langle Sy,S^{-1}y\rangle_{\R^{d}}|=|\langle F^{*}Fy,S^{-1}y\rangle_{\R^{d}}|=|\langle Fy,FS^{-1}y\rangle_{L^{p}\to L^{p'}}|,$$ which can be estimated as follows:
$$\|y\|^{2} \leq \|F(y)\|_{L^{p}}\|F(S^{-1} y)\|_{L^{p'}}\leq C \|F(y)\|_{L^{p}}\|S^{-1} (y)\|_{\R^{d}}\leq C \|F(y)\|_{L^{p}}\|y\|_{\R^{d}}.$$
Therefore, for all $y\neq 0 \in \R^d$ we have $1/C \|y\| \leq \|F(y)\|_{L^{p}}$ that is $$ 1/C \|y\|^{p}\leq \int_{S^{d-1}}|\langle x,y\rangle|^p d\mu(x).$$ Thus $\mu$ is a probabilistic $p$-frame.

Now assume that $\mu$ is a probabilistic $p$-frame, but that $F^{*}$ is not surjective. Then, there exists $z \neq 0 \in \R^d$ such that $\langle z, F^{*}f\rangle=0$ for all $f \in L_{p'}(S^{d-1}, \mu)$. Consequently $$\langle z, F^{*}f\rangle_{\R^{d}}=\langle F(z), f\rangle_{L_{p}\to L_{p'}}=\int_{S^{d-1}}f(x)\langle x , z\rangle, d\mu(x)=0$$ for all $f \in L_{p'}$. A contradiction argument leads to $\langle x, z\rangle =0$ for all $x \in S^{d-1}$ which implies that $z=0$. Thus $F^{*}$ is surjective.
\end{proof}
 
The second moments matrix can also be  used to show that a probabilistic $p$-frame gives rise to a reconstruction formula that extends the finite frame expansion in \eqref{eq:reconstruction for frames}. In addition, the next result generalizes the reconstruction formula for tight probabilistic frames obtained in \cite[Lemma 3.7]{Ehler:2010aa}.

\begin{proposition}\label{reconppf} Let $0<p<\infty$ and assume that $\mu \in \mathcal{M}(S^{d-1},\mathcal{B})$ is a probabilistic $p$-frame for $\R^d$. Set $\tilde{\mu}=\mu \circ S$. Then for each $y \in \R^d$, we have
\begin{equation}\label{reconsppf}
y= \int_{S^{-1}(S^{d-1})}Sz \, \langle z, y\rangle \, d\tilde{\mu}(z)= \int_{S^{-1}(S^{d-1})}z \, \langle Sz, y\rangle \, d\tilde{\mu}(z).
\end{equation} 
\end{proposition}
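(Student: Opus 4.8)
The plan is to reduce both displayed identities in \eqref{reconsppf} to the single algebraic fact that $\int_{S^{d-1}} x\,\langle x,w\rangle\,d\mu(x)=Sw$ for every $w\in\R^d$, which is nothing but a restatement of the definition \eqref{ppfoperator} of the second moments matrix. Before that, the first thing I would establish is that $S$ is invertible, so that $S^{-1}(S^{d-1})$ and $\tilde{\mu}=\mu\circ S$ are meaningful. This is the only place where the probabilistic $p$-frame hypothesis genuinely enters: if $Sz=0$ for some $z\neq 0$, then
\begin{equation*}
0=\langle Sz,z\rangle=\int_{S^{d-1}}\langle x,z\rangle^{2}\,d\mu(x),
\end{equation*}
which forces $\langle x,z\rangle=0$ for $\mu$-almost every $x$, whence $\int_{S^{d-1}}|\langle x,z\rangle|^{p}\,d\mu(x)=0<A\|z\|^{p}$, contradicting the lower bound in \eqref{ppframeineq}. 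Thus $S$ is symmetric positive definite, hence invertible. Note this argument is valid for all $0<p<\infty$ and does not invoke Proposition~\ref{pfonto}, which is restricted to $p\geq 1$.

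Next I would make precise the reading of $\tilde{\mu}=\mu\circ S$ as the set function $\tilde{\mu}(E)=\mu(SE)$, i.e.\ the pushforward of $\mu$ under $S^{-1}$. Since $\mu$ is supported on $S^{d-1}$, the measure $\tilde{\mu}$ is supported on $S^{-1}(S^{d-1})=\{z:\|Sz\|=1\}$, matching the domain of integration, and the corresponding change of variables reads $\int h(z)\,d\tilde{\mu}(z)=\int h(S^{-1}x)\,d\mu(x)$ for every bounded measurable $h$. Applying this with $h(z)=Sz\,\langle z,y\rangle$ and then using the symmetry of $S^{-1}$ gives
\begin{align*}
\int_{S^{-1}(S^{d-1})}Sz\,\langle z,y\rangle\,d\tilde{\mu}(z)
&=\int_{S^{d-1}}x\,\langle S^{-1}x,y\rangle\,d\mu(x)\\
&=\int_{S^{d-1}}x\,\langle x,S^{-1}y\rangle\,d\mu(x)=S\bigl(S^{-1}y\bigr)=y,
\end{align*}
where the last step applies $\int x\,\langle x,w\rangle\,d\mu=Sw$ with $w=S^{-1}y$. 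The second equality in \eqref{reconsppf} follows identically: the change of variables turns $\int z\,\langle Sz,y\rangle\,d\tilde{\mu}(z)$ into $\int S^{-1}x\,\langle x,y\rangle\,d\mu(x)=S^{-1}\bigl(\int x\,\langle x,y\rangle\,d\mu(x)\bigr)=S^{-1}(Sy)=y$.

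All integrals here are finite because $\|x\|=1$ on the support and $\mu$ is a probability measure, so no integrability question arises and the matrix $S$ may be moved in and out of the integrals freely. The only genuine content is the invertibility of $S$ established in the first step; everything afterward is bookkeeping with the change-of-variables formula and the symmetry of $S$. The main obstacle, to the extent there is one, is simply to fix the correct convention for the pushforward $\mu\circ S$ so that its support is $S^{-1}(S^{d-1})$ and the substitution produces exactly the factors $Sz$ and $\langle z,y\rangle$ appearing in \eqref{reconsppf}; once that convention is in place, the identities are immediate.
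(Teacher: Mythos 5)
Your proof is correct, and it follows the same route as the paper: the paper's entire proof of Proposition~\ref{reconppf} is the single observation that $y=SS^{-1}y=S^{-1}Sy$, and your argument is precisely that identity unpacked through the pushforward change of variables $\int h(z)\,d\tilde{\mu}(z)=\int h(S^{-1}x)\,d\mu(x)$ together with the defining property $\int_{S^{d-1}}x\,\langle x,w\rangle\,d\mu(x)=Sw$ of the second moments matrix \eqref{ppfoperator}. One detail of yours is a genuine improvement in rigor: the paper implicitly relies on invertibility of $S$, which its Proposition~\ref{pfonto} only establishes for $1\leq p<\infty$, whereas the statement at hand is for all $0<p<\infty$; your direct argument (if $Sz=0$ then $\int\langle x,z\rangle^{2}\,d\mu=0$, forcing $\int|\langle x,z\rangle|^{p}\,d\mu=0$ and contradicting the lower bound in \eqref{ppframeineq}) covers the full range and closes that small gap.
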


\begin{proof}
The result follows by noticing that $y=SS^{-1}y=S^{-1}Sy$.
\end{proof}

The above result motivates the following definition:

\begin{definition}\label{candualppf} Let $0< p < \infty$. If $\mu \in \mathcal{M}(S^{d-1},\mathcal{B})$ is a probabilistic $p$-frame with frame operator $S$, then $\tilde{\mu}=\mu \circ S \in \mathcal{M}(S^{-1}(S^{d-1}),S^{-1}\mathcal{B})$ is called the \emph{probabilistic canonical dual frame} of $\mu$
\end{definition}

Note that if $\mu$ in Proposition \ref{reconppf} is the counting measure corresponding to a FUNTF $\{x_i\}_{i=1}^N$, then $\tilde{\mu}$ is the counting measure associated to the canonical dual frame of $\{x_i\}_{i=1}^N$.


\begin{lemma}\label{equivappf} a) If $\mu$ is probabilistic frame, then it is a probabilistic $p$-frame for all $1\leq p< \infty$. Conversely, if $\mu$ is a probabilistic $p$-frame for some $1\leq p < \infty$, then it is a probabilistic frame.

\noindent b) Let $1\leq p < \infty$. If $\mu$ is a probabilistic $p$-frame, then so is the canonical dual $\tilde{\mu}$.
\end{lemma}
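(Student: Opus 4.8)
The plan is to deduce both parts from Proposition~\ref{pfonto}, which for $1\le p<\infty$ characterizes probabilistic $p$-frames through the surjectivity of the synthesis operator $F^*$, together with the observation that this surjectivity is governed by a condition on $\mu$ that does not involve $p$.

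For part a) I first recall that a probabilistic frame is exactly a probabilistic $2$-frame, so by Proposition~\ref{pfonto} it corresponds to the case where $F^*$ (with $q=2$) is onto. The crux is to show that surjectivity of $F^*$ is equivalent to the $p$-free condition that no $z\neq 0\in\R^d$ satisfies $\langle x,z\rangle=0$ for $\mu$-almost every $x$ (equivalently, $\supp(\mu)$ spans $\R^d$). Following the contradiction argument in the proof of Proposition~\ref{pfonto}, $F^*$ fails to be onto precisely when some $z\neq 0$ is orthogonal to $\range(F^*)$, i.e.\ $\int_{S^{d-1}}f(x)\langle x,z\rangle\,d\mu(x)=0$ for every admissible $f$; choosing the bounded test function $f(x)=\langle x,z\rangle$, which lies in every $L_q(S^{d-1},\mu)$ because $\mu$ is a probability measure on the sphere, forces $\int\langle x,z\rangle^2\,d\mu=0$ and hence $\langle x,z\rangle=0$ $\mu$-a.e. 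As this criterion makes no reference to $p$, the surjectivity of $F^*$ holds for one exponent in $[1,\infty)$ if and only if it holds for every exponent in $[1,\infty)$. Applying Proposition~\ref{pfonto} at the given exponent and at $p=2$ then yields both implications in a).

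For part b) fix $1\le p<\infty$ and let $\mu$ be a probabilistic $p$-frame with second moments matrix $S$. I first record that $S$ is positive definite, hence invertible: for $z\neq 0$ the lower frame bound gives $\int|\langle x,z\rangle|^p\,d\mu>0$, so $\langle x,z\rangle$ does not vanish $\mu$-a.e.\ and therefore $\langle Sz,z\rangle=\int\langle x,z\rangle^2\,d\mu>0$ (this also follows from a)). Writing $\tilde\mu=\mu\circ S$ as the pushforward of $\mu$ under $S^{-1}$, the change of variables $z=S^{-1}x$ together with the self-adjointness of $S^{-1}$ gives, for every $y\in\R^d$,
\[
\int_{S^{-1}(S^{d-1})}|\langle z,y\rangle|^p\,d\tilde\mu(z)=\int_{S^{d-1}}|\langle x,S^{-1}y\rangle|^p\,d\mu(x).
\]
Applying the frame inequality for $\mu$ to the vector $S^{-1}y$ bounds the right-hand side between $A\|S^{-1}y\|^p$ and $B\|S^{-1}y\|^p$, and the eigenvalue estimates $\lambda_{\max}^{-1}\|y\|\le\|S^{-1}y\|\le\lambda_{\min}^{-1}\|y\|$, with $\lambda_{\min},\lambda_{\max}$ the extreme eigenvalues of $S$, produce the frame bounds $\tilde A=A\lambda_{\max}^{-p}$ and $\tilde B=B\lambda_{\min}^{-p}$ for $\tilde\mu$ (the inequalities here understood over the support $S^{-1}(S^{d-1})$ of $\tilde\mu$, as in the reconstruction formula of Proposition~\ref{reconppf}).

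The change of variables and the eigenvalue estimates in b) are routine; the one step that needs genuine care is the $p$-independence in a). The clean way to handle it is the bounded-test-function argument above, which avoids any $L_p$-duality technicalities and reduces the whole matter to the single geometric statement that $\supp(\mu)$ is not contained in a hyperplane through the origin.
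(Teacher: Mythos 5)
Your proof is correct, but it takes a genuinely different route from the paper's. For part a) the paper argues by direct estimates: the implication from probabilistic frame to probabilistic $p$-frame comes from the duality bound $\|y\|^{2}=|\langle Fy,FS^{-1}y\rangle|\leq\|Fy\|_{L_{p}}\|FS^{-1}y\|_{L_{p'}}\leq C\|Fy\|_{L_{p}}\|y\|$ (using Proposition~\ref{pfonto} only to invert $S$), and the converse is split into two cases: for $p>2$ the pointwise bound $|\langle x,y\rangle|^{p-2}\leq\|y\|^{p-2}$ reduces the $p$-frame lower bound to the $2$-frame lower bound, while for $p<2$ the same duality trick is combined with the inclusion $L_{2}(S^{d-1},\mu)\subset L_{p}(S^{d-1},\mu)$. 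You instead funnel everything through Proposition~\ref{pfonto} by showing that surjectivity of $F^{*}$ is equivalent to the exponent-free condition that $\supp(\mu)$ spans $\R^{d}$; your bounded test function $f(x)=\langle x,z\rangle$ handles all conjugate exponents $q\in(1,\infty]$ at once, which makes the common geometric criterion explicit and removes the case split. For part b) the paper is indirect: by a) $\mu$ is a probabilistic frame, the canonical dual of a probabilistic frame is a probabilistic frame by a citation to \cite{Ehler:2010aa}, and a) again upgrades this to a probabilistic $p$-frame. Your change-of-variables computation is self-contained, replaces the external citation, and even produces explicit frame bounds $\tilde A=A\lambda_{\max}^{-p}$ and $\tilde B=B\lambda_{\min}^{-p}$ for the dual. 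Both you and the paper rely on the same definitional liberty, namely that $\tilde\mu$ lives on $S^{-1}(S^{d-1})$ rather than on the sphere, so ``probabilistic $p$-frame'' must be read as the frame inequality for a measure supported off the sphere; you at least flag this explicitly, which is a small additional merit of your write-up.
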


\begin{proof}
a) 
Assume that $\mu$ is a probabilistic frame and let $1\leq p < \infty$. Then, we only need to check that the lower inequality of~\eqref{ppframeineq} holds, since the corresponding upper bound is trivial. 
By Proposition~\ref{pfonto} (applied to $p=2$), $S=F^{*}F$ is invertible and for each $y \in \R^d$ we have
$$\|y\|^{2}=|\langle Sy,S^{-1}y\rangle_{\R^{d}}|=|\langle F^{*}Fy,S^{-1}y\rangle_{\R^{d}}|=|\langle Fy,FS^{-1}y\rangle_{L^{p}\to L^{p'}}|,$$ which can be estimated as follows:
$$\|y\|^{2} \leq \|F(y)\|_{L_{p}}\|F(S^{-1} y)\|_{L_{p'}}\leq C \|F(y)\|_{L_{p}}\|S^{-1} (y)\|_{\R^{d}}\leq C \|F(y)\|_{L_{p}}\|y\|_{\R^{d}}.$$ Therefore, for all $y\neq 0 \in \R^d$ we have $1/C \|y\| \leq \|F(y)\|_{L_{p}}$ that is $$ 1/C \|y\|^{p}\leq \int_{S^{d-1}}|\langle x,y\rangle|^p d\mu(x).$$ 
For the converse, assume that $\mu$ is a probabilistic $p$-frame for some $p>2$. Then, for all $y\neq 0 \in \R^d$, 
\begin{align*}
A\|y\|^p&\leq  \int_{S^{d-1}} |\langle x,y\rangle|^p d\mu(x)\\
&=\int_{S^{d-1}} |\langle x,y\rangle|^2\, |\langle x,y\rangle|^{p-2}\, d\mu(x)\\
&  \leq \int_{S^{d-1}} \|x\|^{p-2}\, \|y\|^{p-2}\, |\langle x,y\rangle|^2 d\mu(x) \\
&= \|y\|^{p-2}\,  \int_{S^{d-1}} |\langle x,y\rangle|^2 d\mu(x),
\end{align*}from which it follows that $$A\|y\|^2 \leq \int_{S^{d-1}} |\langle x,y\rangle|^2 d\mu(x).$$

If $\mu$ is a probabilistic $p$-frame for some $p<2$. Then, for all $y\neq 0 \in \R^d$, 
$$\|y\|^{2}=|\langle Sy,S^{-1}y\rangle_{\R^{d}}|=|\langle F^{*}Fy,S^{-1}y\rangle_{\R^{d}}|=|\langle Fy,FS^{-1}y\rangle_{L_{p}\to L_{p'}}|,$$ which can be estimated by  
$$\|y\|^{2} \leq \|Fy\|_{L_{p}}\|FS^{-1}y\|_{L_{p'}}\leq C \|Fy\|_{L_{2}}\|y\|,$$ where we have used the fact that for $p<2$, $L_{2}(S^{d-1}, \mu) \subset L_{p}(S^{d-1}, \mu)$. This conclude the proof of a).

b) If $\mu$ is a probabilistic $p$-frame for some $1\leq p < \infty,$ then by a) $\mu$ is a probabilistic frame. In this case, $\tilde{\mu}$ is known to be a probabilistic frame, cf.~\cite{Ehler:2010aa}, and thus a probabilistic $p$-frame. 
\end{proof}

We are particularly interested in tight probabilistic $p$-frame potentials, which we seek to characterize in terms of minimizers of appropriate potentials. This motivates the following definition: 

 \begin{definition}\label{profframpot}
For $0<p<\infty$ and $\mu\in\mathcal{M}(S^{d-1},\mathcal{B})$, the \emph{probabilistic $p$-frame potential} is defined by  
  \begin{equation}\label{eq:prob p pot}
 \PFP(\mu,p) = \int_{S^{d-1}}\int_{S^{d-1}} |\langle x,y\rangle|^p d\mu(x) d\mu(y).
 \end{equation}
\end{definition}

From the weak-star-compactness of the collection of all probability distributions on the sphere, we can deduce that $\PFP(\mu, p)$ admits a minimizer which satisfies
\begin{equation}\label{minppfp}
\PFP(p) = \min_{\mu\in\mathcal{M}(S^{d-1},\mathcal{B})} \PFP(\mu,p).
\end{equation}

We now turn to the minimizers of the probabilistic frame potential $\PFP(\mu)$. In the process, we extend some ideas developed  in \cite{Bjoerck:1955aa} to the probabilistic frame potential. 
\begin{proposition}\label{prop:1}
Let $0<p<\infty$ and let $\mu$ be a minimizer of \eqref{eq:prob p pot}, then 
\begin{itemize}
\item[\textnormal{(1)}] $\int_{S^{d-1}} |\langle x,y\rangle|^p d\mu(x) = \PFP(p)$, for all $y\in\supp(\mu)$,
\item[\textnormal{(2)}] $\int_{S^{d-1}} |\langle x,y\rangle|^p d\mu(x)\geq \PFP(p)$, for all $y\in S^{d-1}$.
\end{itemize}
\end{proposition}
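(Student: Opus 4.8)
The statement concerns a minimizer $\mu$ of the probabilistic $p$-frame potential $\PFP(\mu,p)$, and it is the exact measure-theoretic analogue of the classical variational characterization of energy minimizers on the sphere (the potential-theoretic ``Frostman'' type conditions). The plan is to define the single-argument function
\begin{equation*}
U_\mu(y) := \int_{S^{d-1}} |\langle x,y\rangle|^p \, d\mu(x), \qquad y\in S^{d-1},
\end{equation*}
so that $\PFP(\mu,p) = \int_{S^{d-1}} U_\mu(y)\,d\mu(y)$. The two claims then say: (1) $U_\mu \equiv \PFP(p)$ $\mu$-almost everywhere on $\supp(\mu)$, and (2) $U_\mu(y)\geq \PFP(p)$ everywhere. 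I would prove (2) first and derive (1) from it, since the global lower bound is the cleaner variational inequality.

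\textbf{Proving (2) by a competitor argument.} Fix any $y_0\in S^{d-1}$ and suppose, for contradiction, that $U_\mu(y_0) < \PFP(p)$. The idea is to perturb $\mu$ toward the Dirac mass $\delta_{y_0}$ and show the potential strictly decreases, contradicting minimality. Concretely, for $t\in[0,1]$ set $\mu_t := (1-t)\mu + t\,\delta_{y_0}\in\mathcal{M}(S^{d-1},\mathcal{B})$, which is again a probability measure. Expanding the bilinear potential gives
\begin{equation*}
\PFP(\mu_t,p) = (1-t)^2\,\PFP(\mu,p) + 2t(1-t)\,U_\mu(y_0) + t^2\,|\langle y_0,y_0\rangle|^p.
\end{equation*}
Differentiating at $t=0$ yields
\begin{equation*}
\left.\frac{d}{dt}\PFP(\mu_t,p)\right|_{t=0^+} = -2\,\PFP(\mu,p) + 2\,U_\mu(y_0) = 2\bigl(U_\mu(y_0) - \PFP(p)\bigr) < 0,
\end{equation*}
using $\PFP(\mu,p)=\PFP(p)$ since $\mu$ is a minimizer. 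A strictly negative right derivative means $\PFP(\mu_t,p) < \PFP(p)$ for small $t>0$, contradicting minimality; hence $U_\mu(y)\geq\PFP(p)$ for all $y$, which is (2).

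\textbf{Deriving (1) from (2).} Integrating the pointwise inequality (2) against $\mu$ gives $\int_{S^{d-1}} U_\mu(y)\,d\mu(y) \geq \PFP(p)\int d\mu = \PFP(p)$, but the left side equals $\PFP(\mu,p)=\PFP(p)$ exactly. Thus $\int_{S^{d-1}}\bigl(U_\mu(y)-\PFP(p)\bigr)\,d\mu(y)=0$ with a nonnegative integrand, forcing $U_\mu(y)=\PFP(p)$ for $\mu$-almost every $y$, and hence on $\supp(\mu)$ up to a $\mu$-null set. (A mild continuity argument upgrades this to hold at every point of $\supp(\mu)$: since $U_\mu$ is continuous in $y$ by dominated convergence — the kernel $|\langle x,y\rangle|^p$ is continuous and bounded — the equality extends from a $\mu$-full subset of $\supp(\mu)$ to its closure, namely all of $\supp(\mu)$.) This establishes (1).

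\textbf{Main obstacle.} The substantive step is the competitor/perturbation argument for (2); the rest is essentially bookkeeping. The only genuine subtlety is justifying that the one-sided derivative computation is legitimate and that $U_\mu(y_0)$ is finite — but finiteness is immediate since $|\langle x,y_0\rangle|^p\leq 1$ on the sphere, and the quadratic expansion of $\PFP(\mu_t,p)$ is an exact algebraic identity in $t$, so no delicate limit is needed. One should also note the minimizer exists by the weak-$\ast$ compactness remark following Definition~\ref{profframpot}, so the argument is not vacuous.
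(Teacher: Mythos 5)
Your proof is correct, and its architecture differs from the paper's in a way worth noting. The paper sets up a general class of admissible perturbations, signed measures $\nu$ with $\nu(S^{d-1})=0$ and $\mu+\varepsilon\nu\geq 0$, derives the variational inequality $\PFP(\mu,\nu,p)\geq 0$, and then proves (1) \emph{directly} by contradiction: given $y_1,y_2\in\supp(\mu)$ with potential values $b>a$, it transfers the mass of a small ball $K$ around $y_1$ (chosen so that the oscillation of $P_\mu$ on $K$ is below $\tfrac{b-a}{2}$) onto a Dirac mass at $y_2$, i.e.\ $\nu(E)=\mu(K)\delta_{y_2}(E)-\mu(E\cap K)$, and computes $\PFP(\mu,\nu,p)<0$; statement (2) is then declared ``similar'' and omitted. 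You reverse the logical order: you prove (2) first using the single perturbation $\mu_t=(1-t)\mu+t\delta_{y_0}$ (which is the paper's perturbation with $\nu=\delta_{y_0}-\mu$), where the exact quadratic expansion in $t$ makes the first-order optimality computation trivial, and then you obtain (1) essentially for free by integrating (2) against $\mu$ and using that a nonnegative function with zero integral vanishes $\mu$-a.e., upgraded to every point of $\supp(\mu)$ by continuity of $U_\mu$ (dominated convergence) and minimality of the support. Your route is more economical: it needs only one perturbation, avoids the two-point mass-transfer construction and the oscillation estimate entirely, and actually supplies the proof of (2) that the paper omits. What the paper's setup buys is the more flexible inequality $\PFP(\mu,\nu,p)\geq 0$ for \emph{all} admissible $\nu$ (the standard tool in Bj\"orck's energy-minimization framework it cites), and a proof of (1) that does not route through (2). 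Both arguments ultimately rest on the same continuity of the potential $y\mapsto\int_{S^{d-1}}|\langle x,y\rangle|^p\,d\mu(x)$, so neither has an advantage in hypotheses.
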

\begin{proof}
The proof will use the following observation. Let  $\mu$ be a probability measure on $S^{d-1}$ and choose a measure  $\nu$, such that $\nu(S^{d-1})=0$ and $\mu+\varepsilon \nu\geq 0$, for all $0\leq \varepsilon\leq 1$. Let us also introduce the notation $$ \PFP(\mu,\nu,p):=\int_{S^{d-1}}\int_{S^{d-1}} |\langle x,y\rangle|^p d\mu(x) d\nu(y).$$ We then obtain
\begin{align*}
\PFP(\mu) & \leq \PFP(\mu+\varepsilon \nu,p)\\
& = \PFP(\mu,p)+\varepsilon^2\PFP(\nu,p) + 2\varepsilon \PFP(\mu,\nu,p)\\
& = \PFP(\mu)+\varepsilon^2\PFP(\nu,p) + 2\varepsilon \PFP(\mu,\nu,p).
\end{align*} 
We thus have $0\leq \varepsilon \PFP(\nu,p)+2\PFP(\mu,\nu,p)$, for all $0\leq \varepsilon\leq 1$, which implies $\PFP(\mu,\nu,p)\geq 0$. 

We now prove $(1)$ using a contradiction argument. In particular, assume that $(1)$ does not hold. This implies that there are $y_1, y_2\in \supp(\mu)$ such that 
\begin{equation*}
a := \int_{S^{d-1}} |\langle x,y_2\rangle|^p d\mu(x) < \int_{S^{d-1}} |\langle x,y_1\rangle|^p d\mu(x) =: b.
\end{equation*}
Set $P_\mu(y)=\int_{S^{d-1}} |\langle x,y\rangle|^p d\mu(x)$. 
Let $K$ be an open ball around $y_1$ in $S^{d-1}$ and so small that $y_2\not\in K$ and that the oscillation of $P_\mu(y)$ on $K$ is smaller than $\frac{b-a}{2}$. Let $m=\mu(K)>0$. One can check that the measure $\nu$ defined by 
\begin{equation*}
\nu(E) := m\delta_{y_2}(E)-\mu(E\cap K),\quad E\in\mathcal{B},
\end{equation*}
satisfies $\nu(S^{d-1})=0$, and $\mu +\epsilon \nu \geq 0$. Hence, $\PFP(\mu,\nu,p)\geq 0$. On the other hand, we can estimate
$$
\PFP(\mu,\nu,p)  = \int_{S^{d-1}} P_\mu(y)d\nu(y)= P_\mu(y_2)m - \int_K P_\mu(y)d\mu(y)= am - \int_K P_\mu(y)d\mu(y)$$ and so

$$\PFP(\mu,\nu,p)  \leq am - (b-\frac{b-a}{2})m  = - \frac{b-a}{2}m <0.$$

This is a contradiction to $\PFP(\mu,\nu,p)\geq 0$ and implies that there is a constant $C$ such that $P_\mu(y)=C$, for all $y\in\supp(\mu)$.  
We still have to verify that the constant $C$ is in fact $\PFP(p)$: 
\begin{align*}
\PFP(p)  = \PFP(\mu,p) & =  \int_{S^{d-1}} P_\mu(y)d\mu(y) \\
& =  \int_{\supp(\mu)} P_\mu(y)d\mu(y)\\
& =  \int_{\supp(\mu)} C d\mu(y) = C.
\end{align*}

The proof of $(2)$ is similar to the one above, and so we omit it. 
\end{proof}
The following result is an immediate consequence of Proposition~\ref{prop:1}. 

\begin{corollary}\label{theorem:tight p frame is necessary}
Let $0<p<\infty$ and let $\mu$ be a minimizer of \eqref{eq:prob p pot}, then
\begin{itemize}
\item[\textnormal{(1)}] $\int_{S^{d-1}} |\langle x,y\rangle|^p d\mu(x) = \PFP(p)\|y\|^p$, for all $\frac{y}{\|y\|}\in\supp(\mu)$,
\item[\textnormal{(2)}] $\supp(\mu)$ is a complete subset of $\R^d$. 
\end{itemize}
\end{corollary}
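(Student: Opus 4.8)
The plan is to derive Corollary~\ref{theorem:tight p frame is necessary} directly from Proposition~\ref{prop:1} by exploiting the homogeneity of the integrand in $y$. First I would prove (1). The key observation is that the map $y\mapsto \int_{S^{d-1}}|\langle x,y\rangle|^p\,d\mu(x)$ is positively homogeneous of degree $p$: for any scalar $t>0$ and any unit vector $u\in S^{d-1}$, linearity of the inner product gives $|\langle x, tu\rangle|^p = t^p|\langle x,u\rangle|^p$, so that
\begin{equation*}
\int_{S^{d-1}}|\langle x,tu\rangle|^p\,d\mu(x) = t^p\int_{S^{d-1}}|\langle x,u\rangle|^p\,d\mu(x).
\end{equation*}
Now take an arbitrary $y\neq 0$ with $u:=\frac{y}{\|y\|}\in\supp(\mu)$ and write $t=\|y\|$. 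Proposition~\ref{prop:1}(1) applies to the unit vector $u\in\supp(\mu)$ and yields $\int_{S^{d-1}}|\langle x,u\rangle|^p\,d\mu(x)=\PFP(p)$. Substituting this into the homogeneity identity gives
\begin{equation*}
\int_{S^{d-1}}|\langle x,y\rangle|^p\,d\mu(x) = \|y\|^p\,\PFP(p),
\end{equation*}
which is exactly statement (1).

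For (2), I would argue by contradiction, showing that completeness of $\supp(\mu)$ (i.e.\ that it spans $\R^d$) is forced by positivity of $\PFP(p)$. Suppose $\supp(\mu)$ is not complete; then $V:=\spann(\supp(\mu))$ is a proper subspace of $\R^d$, so there exists a nonzero $y_0\in\R^d$ orthogonal to $V$, and in particular $\langle x,y_0\rangle = 0$ for every $x\in\supp(\mu)$. Since $\mu$ is concentrated on $\supp(\mu)$, this gives
\begin{equation*}
\int_{S^{d-1}}|\langle x,y_0\rangle|^p\,d\mu(x) = 0.
\end{equation*}
On the other hand, Proposition~\ref{prop:1}(2) guarantees $\int_{S^{d-1}}|\langle x,y_0/\|y_0\|\rangle|^p\,d\mu(x)\geq \PFP(p)$, and by the homogeneity above the integral for $y_0$ equals $\|y_0\|^p\,\PFP(p)$. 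To close the argument I need $\PFP(p)>0$, which holds because $\PFP(p)=\PFP(\mu,p)\geq 0$ and cannot vanish: if it were zero, then $\int|\langle x,y\rangle|^p\,d\mu(x)=0$ for every $y$ by part (1), forcing $\langle x,y\rangle=0$ $\mu$-a.e.\ for all $y$, which is impossible for a probability measure on the sphere. Hence $\|y_0\|^p\,\PFP(p)>0$, contradicting the vanishing of the integral; therefore $\supp(\mu)$ must be complete.

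The routine steps are the homogeneity rescaling and the spanning argument; the only point requiring genuine care is the strict positivity $\PFP(p)>0$, which I expect to be the main obstacle since Proposition~\ref{prop:1} only delivers nonnegativity of the mixed potential. I would dispose of it exactly as sketched, by noting that a probability measure on $S^{d-1}$ cannot be annihilated by all linear functionals simultaneously. With positivity in hand, both parts of the corollary follow immediately from Proposition~\ref{prop:1} together with the degree-$p$ homogeneity of the frame-potential integrand.
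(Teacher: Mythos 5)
Your proof is correct and follows essentially the same route as the paper: part (1) is the degree-$p$ homogeneity of $y\mapsto\int_{S^{d-1}}|\langle x,y\rangle|^p\,d\mu(x)$ combined with Proposition~\ref{prop:1}(1), and part (2) is the same orthogonal-complement contradiction against Proposition~\ref{prop:1}(2). The only difference is that you explicitly verify $\PFP(p)>0$, a point the paper's one-line proof leaves implicit; your justification of this is sound, provided the claimed impossibility is checked at a single point $y\in\supp(\mu)$, where the open neighborhood $\{x : \langle x,y\rangle>1/2\}$ has positive $\mu$-measure and so $\langle x,y\rangle$ cannot vanish $\mu$-a.e.
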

\begin{proof}
$ (1)$ directly follows from $(1)$ in Proposition \ref{prop:1}.

$(2)$ If $\supp(\mu)$ is not complete in $\R^d$, then there is an element $y\in S^{d-1}$ that is in the orthogonal complement. However, this contradicts (2) in Proposition \ref{prop:1}. 
\end{proof}

We can now characterize the minimizers of the probabilistic $p$-frame potential when  $0<p<2$. In fact, we shall show that these minimizers are discrete probability measures, and the following theorem is the analogue of Proposition \ref{prop:k multiples and small p}:
 \begin{theorem}\label{theorem:p less than 2}
 Let $0<p<2$, then the minimizers of  \eqref{eq:prob p pot} are exactly those probability distributions $\mu$ that satisfy both, 
 \begin{itemize}
 \item[(i)] there is an orthonormal basis $\{x_1,\ldots,x_d\}$ for $\R^d$ such that 
 \begin{equation*}
 \{x_1,\ldots,x_d\} \subset \supp(\mu) \subset \{\pm x_1,\ldots,\pm x_d\},
 \end{equation*}
 \item[(ii)] there is $f:S^{d-1}\rightarrow \R$ such that $\mu(x) = f(x) \nu_{\pm x_1,\ldots,\pm x_d}(x)$ and
 \begin{equation*}
 f(x_i) +f(-x_i) = \frac{1}{d}.
 \end{equation*} 
 \end{itemize} 
 \end{theorem}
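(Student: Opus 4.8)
The plan is to realize this as the probabilistic analogue of Theorem~\ref{prop:k multiples and small p}. Because $|\langle x,y\rangle|\le 1$ on $S^{d-1}$ and $0<p<2$, the elementary inequality $t^{p}\ge t^{2}$ on $[0,1]$ lets me bound the probabilistic $p$-frame potential below by the probabilistic frame potential $\PFP(\mu,2)$, whose minimizers I can compute exactly through the second moments matrix. The characterization then emerges from analyzing when \emph{both} the pointwise inequality and the second-moment inequality are saturated simultaneously.

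First I would pin down the minimum value. With $S$ the second moments matrix~\eqref{ppfoperator}, expanding $|\langle x,y\rangle|^{2}=\sum_{i,j}x_ix_jy_iy_j$ gives $\PFP(\mu,2)=\sum_{i,j}S_{ij}^{2}=\trace(S^{2})$. Since $x\in S^{d-1}$ forces $\trace(S)=\int_{S^{d-1}}\|x\|^{2}\,d\mu(x)=1$, the Cauchy--Schwarz inequality applied to the eigenvalues of $S$ gives $\trace(S^{2})\ge\tfrac1d(\trace S)^{2}=\tfrac1d$, with equality exactly when $S=\tfrac1d I_d$. Combined with $|\langle x,y\rangle|^{p}\ge|\langle x,y\rangle|^{2}$ this yields $\PFP(\mu,p)\ge\PFP(\mu,2)\ge\tfrac1d$ for every $\mu$, and the bound is achieved by $\mu=\tfrac1d\sum_{i=1}^{d}\delta_{x_i}$ for any orthonormal basis $\{x_i\}$; hence $\PFP(p)=\tfrac1d$.

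Next I would read off the structure of a minimizer $\mu$ from the two equalities it must force. Equality in the trace estimate gives $S=\tfrac1d I_d$, so $\mu$ is a tight probabilistic frame. For the pointwise estimate, set $h(x,y):=|\langle x,y\rangle|^{p}-|\langle x,y\rangle|^{2}\ge 0$, a continuous function on the compact set $S^{d-1}\times S^{d-1}$ with $\int\!\!\int h\,d\mu\,d\mu=\PFP(\mu,p)-\PFP(\mu,2)=0$. Since $\supp(\mu\times\mu)=\supp(\mu)\times\supp(\mu)$, continuity upgrades this to $h\equiv 0$ on $\supp(\mu)\times\supp(\mu)$, i.e.\ $|\langle x,y\rangle|\in\{0,1\}$ for \emph{all} $x,y\in\supp(\mu)$. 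Thus any two support points are orthogonal or antipodal, so a maximal orthonormal subset $\{x_1,\dots,x_k\}\subseteq\supp(\mu)$ satisfies $\supp(\mu)\subseteq\{\pm x_1,\dots,\pm x_k\}$; completeness of $\supp(\mu)$ (Corollary~\ref{theorem:tight p frame is necessary}) forces $k=d$, giving the orthonormal basis in (i). Writing $\mu=\sum_i f(x_i)\delta_{x_i}+\sum_i f(-x_i)\delta_{-x_i}$ and diagonalizing $S$ in the basis $\{x_i\}$ makes $S_{ii}=f(x_i)+f(-x_i)$, so $S=\tfrac1d I_d$ is precisely the constraint $f(x_i)+f(-x_i)=\tfrac1d$ of (ii). The converse is immediate: such a $\mu$ has $|\langle x,y\rangle|\in\{0,1\}$ throughout its support and $S=\tfrac1d I_d$, whence $\PFP(\mu,p)=\trace(S^{2})=\tfrac1d$.

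The main obstacle is the step from the vanishing \emph{integral} of $h$ to the \emph{pointwise} dichotomy $|\langle x,y\rangle|\in\{0,1\}$ on the support: an $L^{1}$ argument alone only gives equality $\mu\times\mu$-almost everywhere, which is too weak to rigidify the support into a finite orthonormal configuration. The continuity-plus-support argument above is what closes this gap, turning the integral identity into a statement holding at every pair of support points. A minor secondary point is the relabeling that places the chosen representative $x_i$ in $\supp(\mu)$; this is legitimate since $f(x_i)+f(-x_i)=\tfrac1d>0$ guarantees at least one of $\pm x_i$ carries positive mass.
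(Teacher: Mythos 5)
Your proof is correct, and while it follows the same skeleton as the paper's (exploit $t^{p}\ge t^{2}$ on $[0,1]$ to reduce to the $p=2$ case, rigidify the support, then pin down the weights), it executes the two key sub-steps differently and more self-containedly. Where the paper imports the value and the minimizers of $\PFP(\cdot,2)$ from an external result (Theorem~3.10 of the paper's reference on random tight frames), you derive them from scratch via the identity $\PFP(\mu,2)=\trace(S^{2})$, the constraint $\trace(S)=1$, and Cauchy--Schwarz on the eigenvalues; this yields not only the minimum $\tfrac1d$ but also the structural equality condition $S=\tfrac1d I_d$, which the paper never states explicitly. That extra information pays off at the end: you obtain condition (ii) by simply diagonalizing $S$ in the basis $\{x_i\}$ and reading off $S_{ii}=f(x_i)+f(-x_i)=\tfrac1d$, whereas the paper folds the measure onto $\{x_1,\dots,x_d\}$ via $\tilde f(x)=f(x)+f(-x)$ and then again defers to the arguments of the external reference. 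Your support-rigidity step (a nonnegative continuous function with vanishing integral is identically zero on $\supp(\mu\times\mu)=\supp(\mu)\times\supp(\mu)$) is the same idea as the paper's explicit $\varepsilon$-ball contradiction, just phrased abstractly; both are valid, and you correctly flag that this continuity argument is what upgrades the almost-everywhere statement to one holding at every pair of support points. One small remark: having established $S=\tfrac1d I_d$, you could also get completeness of $\supp(\mu)$ directly (a unit vector $y$ orthogonal to the support would give $y^{T}Sy=0$), so the appeal to Corollary~\ref{theorem:tight p frame is necessary} is not even needed; the paper, lacking the explicit $S=\tfrac1d I_d$, does need that corollary. Finally, your explicit verification of the converse direction is welcome, since the paper leaves it implicit.
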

 The measure $\nu_{\pm x_1,\ldots,\pm x_d}(x)$ in Theorem \ref{theorem:p less than 2} denotes the counting measure of the set $\{\pm x_i : i=1,\ldots,d\}$.

 \begin{proof}
 Since $0\leq |\langle x,y\rangle|\leq 1$, for $x,y\in S^{d-1}$, we have $\PFP(\mu,p)\geq \PFP(\mu,2)$. In ~\cite[Theorem 3.10]{Ehler:2010aa} it was shown that the normalized counting measure $\frac{1}{d}\nu_{x_1,\ldots,x_d}$ of an orthonormal basis minimizes $\PFP(\cdot,2)$. Due to $\PFP(\frac{1}{d}\nu_{x_1,\ldots,x_d},2) = \PFP(\frac{1}{d}\nu_{x_1,\ldots,x_d},p)$, we obtain that $\frac{1}{d}\nu_{x_1,\ldots,x_d}$ also minimizes $\PFP(\cdot,p)$ and hence $\PFP(p)=\PFP(2)$. 
 
 In the following, we prove that all minimizers of $\PFP(\cdot,p)$ are essentially induced by an orthonormal basis. Let $\mu$ be a minimizer and let $v,w\in \supp(\mu)$. We first show that $|\langle v,w\rangle |\in \{0,1\}$. The implications $\langle v,w\rangle =1$ if and only if $v=w$ and $\langle v,w\rangle =-1$ if and only if  $v=-w$ are trivial. 
 
 Suppose now that $v\neq \pm w$ and $\langle v,w\rangle \neq 0$, then there exist  $\varepsilon>0$ and $\delta_\varepsilon>0$ such that 
 \begin{itemize}
 \item[(a)] $B_\varepsilon(v)\cap B_\varepsilon(w)=\emptyset$ and $\mu(B_\varepsilon(v)), \mu(B_\varepsilon(w)) \geq \delta_\varepsilon$. 
 \item[(b)] for all $x\in B_\varepsilon(v)$ and $y\in B_\varepsilon(w)$, $|\langle x,y\rangle |^p\geq |\langle x,y\rangle |^2+\varepsilon$.
 \end{itemize}
By using $B=B_\varepsilon(v)\times B_\varepsilon(w)$, this implies
\begin{align*}
\PFP(\mu,p) & = \int_{B} |\langle x,y\rangle|^p d\mu(x) d\mu(y) + \int_{S^{d-1}\times S^{d-1}\setminus B } |\langle x,y\rangle|^p d\mu(x) d\mu(y)\\
& \geq \int_{B} (|\langle x,y\rangle|^2+\varepsilon) d\mu(x) d\mu(y) + \int_{S^{d-1}\times S^{d-1}\setminus B } |\langle x,y\rangle|^2 d\mu(x) d\mu(y)\\
& = \PFP(\mu,2) + \varepsilon \mu(B_\varepsilon(v)) \mu(B_\varepsilon(w))\\
&\geq \PFP(\mu,2) +\varepsilon \delta_\varepsilon^2 > \PFP(\mu,2),
\end{align*}
 which is a contradiction. Thus, we have verified that $|\langle x,y\rangle|\in \{0,1\}$, for all $x,y\in \supp(\mu)$. Distinct elements in $\supp(\mu)$ are then either orthogonal to each other or antipodes. According to Corollary \ref{theorem:tight p frame is necessary}, $\supp(\mu)$ is complete in $\R^d$. Thus, there must be an orthonormal basis $\{x_i\}_{i=1}^d$ such that
 \begin{equation*}
  \{x_1,\ldots,x_d\} \subset \supp(\mu) \subset \{\pm x_1,\ldots,\pm x_d\}.
 \end{equation*} 
Consequently, there is a density $f:S^{d-1}\rightarrow\R$ that vanishes on $S^{d-1}\setminus \supp(\mu)$ such that $\mu(x)=f(x)\nu_{\pm x_1,\ldots,\pm x_d}(x)$.  
 
To verify that $f$ satisfies (ii), let us define $\tilde{f}:S^{d-1}\rightarrow \R$ by 
\begin{equation*}
\tilde{f}(x)=\begin{cases} f(x)+f(-x),& x\in\{x_1,\ldots,x_d\}\\
0,& \text{ otherwise. } 
\end{cases}
\end{equation*}
This implies that $\tilde{\mu}(x)=\tilde{f}(x)\nu_{x_1,\ldots,x_d}(x)$ is also a minimizer of $\PFP(\cdot,2)$. But the minimizers of the  probabilistic frame potential for $p=2$ have been investigated in~\cite[Section 3]{Ehler:2010aa}. We can follow the arguments given there to obtain $\tilde{f}(x_i)=\frac{1}{d}$, for all $i=1,\ldots,d$. 
\end{proof}

 For even integers $p$, we can  give the minimum of $\PFP(\mu, p)$ and characterize its minimizers. The following theorem generalizes Theorem \ref{theorem:p even integer discrete}. Moreover, note that the bounds are now sharp, i.e., for any even integer $p$, there is a probabilistic tight $p$-frame: 
 
 \begin{theorem}\label{theorem:p even integer}
 Let $p$ be an even integer. For any probability distribution $\mu$  on $S^{d-1}$,  
  \begin{equation*}
 \PFP(\mu, p)=\int_{S^{d-1}}\int_{S^{d-1}} |\langle x,y\rangle|^p d\mu(x) d\mu(y) \geq  \frac{1\cdot 3\cdot 5\cdots(p-1)}{d(d+2)\cdots (d+p-2) },
 \end{equation*}
and equality holds if and only if $\mu$ is a probabilistic tight $p$-frame. 
 \end{theorem}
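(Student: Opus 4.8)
The plan is to follow the strategy behind Theorem~\ref{theorem:p even integer discrete} (Venkov's bound), replacing the discrete antipodally symmetric configuration by an arbitrary probability measure and exploiting that for even $p=2k$ one has $|\langle x,y\rangle|^p=\langle x,y\rangle^{2k}$, so that no symmetry hypothesis is needed. The engine is the expansion of the zonal kernel $\langle x,y\rangle^{2k}$ into spherical harmonics. Let $\{Y_{\ell,m}\}_m$ be an orthonormal basis of the degree-$\ell$ spherical harmonics in $L^2(S^{d-1},\sigma)$, with $\sigma$ normalized to mass one so that $Y_{0,0}\equiv 1$, and let $Z_\ell(x,y)=\sum_m Y_{\ell,m}(x)\overline{Y_{\ell,m}(y)}$ be the associated zonal reproducing kernel. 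Since $t\mapsto t^{2k}$ is an even polynomial of degree $2k$, the addition theorem gives the finite expansion
\[
\langle x,y\rangle^{2k}=\sum_{j=0}^{k}\lambda_j\,Z_{2j}(x,y),
\]
and the central analytic fact I would establish is that $\lambda_j>0$ for every $0\le j\le k$; this is the positivity of the Gegenbauer connection coefficients of the monomial $t^{2k}$ relative to the weight $(1-t^2)^{(d-3)/2}$, i.e.\ the same ingredient used in the $t$-design literature cited before Theorem~\ref{theorem:p even integer discrete}.

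First I would integrate this expansion against $\mu\times\mu$. Because $\int_{S^{d-1}}\int_{S^{d-1}}Z_{2j}(x,y)\,d\mu(x)\,d\mu(y)=\sum_m\big|\int_{S^{d-1}}Y_{2j,m}\,d\mu\big|^2\ge 0$, this yields
\[
\PFP(\mu,2k)=\sum_{j=0}^{k}\lambda_j\sum_m\Big|\int_{S^{d-1}}Y_{2j,m}\,d\mu\Big|^2
=\lambda_0+\sum_{j=1}^{k}\lambda_j\sum_m\Big|\int_{S^{d-1}}Y_{2j,m}\,d\mu\Big|^2,
\]
where the $j=0$ term collapses to $\lambda_0$ using $Y_{0,0}\equiv 1$ and $\mu(S^{d-1})=1$. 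Since every remaining summand is nonnegative, $\PFP(\mu,2k)\ge\lambda_0$. To identify $\lambda_0$ with the claimed bound $M:=\tfrac{1\cdot 3\cdots(p-1)}{d(d+2)\cdots(d+p-2)}$, I would integrate the kernel expansion in $y$ against $\sigma$: all $j\ge 1$ zonal terms have vanishing $\sigma$-mean, so $\lambda_0=\int_{S^{d-1}}\langle x,y\rangle^{2k}\,d\sigma(y)=\int_{S^{d-1}}y_1^{2k}\,d\sigma(y)$, which is the standard sphere moment $\tfrac{(2k-1)!!}{d(d+2)\cdots(d+2k-2)}=M$. Equivalently $\lambda_0=\PFP(\sigma,2k)$, consistent with $\sigma$ being a tight probabilistic $p$-frame.

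For the equality case, the bound is attained exactly when $\lambda_j\sum_m|\int_{S^{d-1}}Y_{2j,m}\,d\mu|^2=0$ for all $1\le j\le k$; since each $\lambda_j>0$, this forces every harmonic moment $\int_{S^{d-1}}Y_{2j,m}\,d\mu$ of degree $2,4,\dots,2k$ to vanish. It remains to identify this with $\mu$ being a tight probabilistic $p$-frame, and for this I would examine the homogeneous degree-$2k$ polynomial $P(y)=\int_{S^{d-1}}\langle x,y\rangle^{2k}\,d\mu(x)$. Restricted to the sphere its expansion is $P(y)=\sum_{j=0}^{k}\lambda_j\sum_m\big(\int_{S^{d-1}}Y_{2j,m}\,d\mu\big)\,\overline{Y_{2j,m}(y)}$; by linear independence of the $Y_{2j,m}$, all higher moments vanish if and only if $P$ is constant on $S^{d-1}$, i.e.\ $P(y)=\lambda_0\|y\|^{2k}$ for all $y\in\R^d$ by homogeneity. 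By Definition~\ref{probpframe} this is precisely the statement that $\mu$ is a tight probabilistic $p$-frame, with frame constant $A=M$, which completes the characterization.

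The main obstacle is the positivity $\lambda_j>0$ for $0\le j\le k$: both the lower bound and, more critically, the forcing of all higher harmonic moments to vanish in the equality case depend on it, and a vanishing or negative coefficient would break the argument. I would obtain it either from the explicit Funk--Hecke evaluation of $\int_{S^{d-1}}\langle x,y\rangle^{2k}\,Y_{2j}(x)\,d\sigma(x)$, which returns a strictly positive multiple of $Y_{2j}(y)$ for each $0\le j\le k$, or by directly invoking the classical nonnegativity of the connection coefficients of monomials in the Gegenbauer basis as used in \cite{Venkov:2001aa,Seidel:2001aa}. A secondary, routine point is the interchange of integration with the harmonic expansion, which is immediate since the expansion is finite and all functions involved are continuous on the compact sphere.
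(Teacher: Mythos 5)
Your proof is correct, and its skeleton coincides with the paper's: both expand the even monomial $t^{p}$, $p=2k$, in the Gegenbauer system for the weight $(1-t^2)^{(d-3)/2}$ (your zonal kernels $Z_{2j}$ are, by the addition theorem, positive multiples of $C^{\alpha}_{2j}(\langle x,y\rangle)$ with $\alpha=\tfrac{d}{2}-1$), both rely on the strict positivity of the connection coefficients $\lambda_j$, and both get the lower bound by discarding the nonnegative terms with $j\geq 1$. The genuine differences are in execution. For the nonnegativity of $\int\int C^{\alpha}_{2j}(\langle x,y\rangle)\,d\mu(x)\,d\mu(y)$, the paper quotes the discrete positive-definiteness of the Gegenbauer kernels and passes to general $\mu$ by weak-star density of finitely supported measures; you instead derive it from the addition theorem, which writes each double integral as the sum of squares $\sum_m \lvert \int Y_{2j,m}\,d\mu \rvert^2$. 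That extra structure is precisely what makes your equality analysis self-contained: equality forces every harmonic moment of degrees $2,\dots,2k$ to vanish, which, by expanding $P(y)=\int \langle x,y\rangle^{2k}\,d\mu(x)$ and invoking homogeneity, is equivalent to $P(y)=\lambda_0\lVert y\rVert^{p}$, i.e.\ to $\mu$ being a tight probabilistic $p$-frame with constant $\lambda_0$. The paper treats this step only in outline: it symmetrizes $\mu$, notes that $y\mapsto \int \lvert\langle x,y\rangle\rvert^{p}\,d\mu(x)$ is a polynomial, and defers to Venkov's discrete argument; similarly, it cites Seidel for the value of $\lambda_0$, whereas you compute it directly as the sphere moment $\int y_1^{2k}\,d\sigma(y)$. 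In short, your route buys a fully explicit, self-contained proof of the ``if and only if'' part (the least detailed portion of the published argument), at the modest price of invoking the spherical-harmonics addition theorem, while the paper's route is shorter on the page but leans on external results for both the positive-definiteness and the equality characterization; the one shared obligation in both treatments is the positivity $\lambda_j>0$, which each ultimately takes from the $t$-design literature.
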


 \begin{proof}
 Let $\alpha=\frac{d}{2}-1$ and consider the Gegenbauer polynomials $\{C_{n}^{\alpha}\}_{n\geq 0}$ defined by 
 \begin{equation*}
 C_0^\alpha(x)  = 1, \qquad C_1^\alpha(x) = 2 \alpha x,
 \end{equation*}
 \begin{align*}
C_{n}^\alpha(x) &= \frac{1}{n}[2x(n+\alpha-1)C_{n-1}^\alpha(x) - (n+2\alpha-2)C_{n-2}^\alpha(x)]\\
&= C_n^{(\alpha)}(z)=\sum_{k=0}^{\lfloor n/2\rfloor} (-1)^k\frac{\Gamma(n-k+\alpha)}{\Gamma(\alpha)k!(n-2k)!}(2z)^{n-2k}.
\end{align*}
$\{C_{n}^{(\alpha)}\}_{n=1}^s$ is an orthogonal basis for the collection of polynomials of degree less or equal to $s$ on the interval $[-1,1]$ with respect to the weight
\begin{equation*}
w(z) = \left(1-z^2\right)^{\alpha-\frac{1}{2}},
\end{equation*} 
i.e., for $m\neq n$,
 \begin{equation*}
 \int_{-1}^1 C_n^{(\alpha)}(x)C_m^{(\alpha)}(x)w(x)\,dx = 0.
 \end{equation*}
 They are normalized by
 \begin{equation*}
 \int_{-1}^1 \left[C_n^{(\alpha)}(x)\right]^2(1-x^2)^{\alpha-\frac{1}{2}}\,dx = \frac{\pi 2^{1-2\alpha}\Gamma(n+2\alpha)}{n!(n+\alpha)[\Gamma(\alpha)]^2}.
 \end{equation*}
The polynomials $t^p$, $p$ an even integer, can be represented by means of
\begin{equation*}
t^p=\sum_{k=0}^p \lambda_k C^{\alpha}_k(t).
\end{equation*}
It is known (see, e.g.,~\cite{Bachoc:2005aa,Delsarte:1977aa}) that $\lambda_i> 0$, $i=0,\ldots,p$, and $\lambda_0$ is given by
 \begin{equation*}
\lambda_0= \frac{1}{c}\int_{-1}^1 t^p w(t) dt,
 \end{equation*}
 where 
 \begin{equation*}
 c = \frac{\pi 2^{d+3}\Gamma(d-2) }{(\frac{d}{2}-1)\Gamma(\frac{d}{2}-1)^2}.
 \end{equation*}
 Moreover,  $C^\alpha_k$ induces a positive kernel, i.e., for $\{x_i\}_{i=1}^N\subset S^{d-1}$ and $\{u_i\}_{i=1}^N\subset \R$,
 \begin{equation*}
 \sum_{i,j=1}^{N} u_iC^{\alpha}_k (\langle x_i,x_j\rangle )u_j \geq  0, \quad \forall k=0,1,2,...
 \end{equation*}
 see~\cite{Bachoc:2005aa,Delsarte:1977aa}. Note that the probability measures with finite support are weak star dense in $\mathcal{M}(S^{d-1},\mathcal{B})$. Since $C^{\alpha}_k$ is continuous, we obtain, for all $\mu\in \mathcal{M}(S^{d-1},\mathcal{B})$, 
 \begin{equation*} 
 \int_{S^{d-1}} \int_{S^{d-1}} C^{\alpha}_k (\langle x,y\rangle )d\mu(x) d\mu(y) \geq  0, \quad \forall k=0,1,2,...
 \end{equation*}
We can then estimate
\begin{align*}
\int_{S^{d-1}} \int_{S^{d-1}} |\langle x,y\rangle|^p d\mu(x) d\mu(y) & = \int_{S^{d-1}} \int_{S^{d-1}}\sum_{k=0}^p \lambda_k C^{\alpha}_k (\langle x,y\rangle )d\mu(x) d\mu(y)\\
& = \sum_{k=0}^p \lambda_k \int_{S^{d-1}} \int_{S^{d-1}}C^{\alpha}_k (\langle x,y\rangle ) d\mu(x) d\mu(y) \geq \lambda_0.
\end{align*}
From the results in \cite{Seidel:2001aa}, one can deduce that 
 \begin{equation*}
\lambda_0=  \frac{1\cdot 3\cdot 5\cdots(2t-1)}{d(d+2)\cdots (d+2t-2) },
 \end{equation*}
which provides the desired estimate.

We still have to address the ``if and only if'' part. Equality holds if and only if $\mu$ satisfies 
 \begin{equation*}
  \int_{S^{d-1}}\int_{S^{d-1}} C^{\alpha}_k (\langle x,y\rangle ) d\mu(x) d\mu(y) = 0, \quad \forall k=1,\ldots, p. 
 \end{equation*}
We shall follow the approach outlined in \cite{Venkov:2001aa} in which the analog  of Theorem~\ref{theorem:p even integer discrete} was addressed  for finite symmetric collections of points. In this case, the finite symmetric sets of points lead to finite sums rather than integrals as above. The key ideas that we need in order to use the approach presented in \cite{Venkov:2001aa} are: First, $\tilde{\mu}(E):=\frac{1}{2}(\mu(E)+\mu(-E))$, for $E\in\mathcal{B}$, satisfies $\PFP(\tilde{\mu},p) = \PFP(\mu,p)$. Thus, we can assume that $\mu$ is symmetric. Secondly and more critically, the map 
\begin{equation*}
y\mapsto \int_{S^{d-1}} |\langle x,y\rangle |^p d\mu(x)
\end{equation*}
is a polynomial in $y$. In fact, the integral resolves in the polynomial's coefficients. These two observations enable us to follow the lines in \cite{Venkov:2001aa}, and we can conclude the proof.
\end{proof}
 
 \begin{remark}
One may speculate that Theorem \ref{theorem:p even integer} could be extended to $p\geq 2$ that are not even integers. This is not true in general. For $d=2$ and $p=3$, for instance, the equiangular FUNTF with $3$ elements induces a smaller potential than the uniform distribution. The uniform distribution is a probabilistic tight $3$-frame, but the equiangular FUNTF is not.
\end{remark}

\section*{Acknowledgements}
The authors would like to thank C.~Bachoc, W.~Czaja, C.~Wickman, and W.~S.~Yu for discussions leading to some of the results presented here. M.~Ehler was supported by the Intramural Research Program of the National Institute of Child Health and Human Development and by NIH/DFG Research Career Transition Awards Program (EH 405/1-1/575910).  K.~A.~Okoudjou was partially supported by ONR grant N000140910324, by RASA from the Graduate School of UMCP, and by the Alexander von Humboldt foundation.


\bibliographystyle{plain}

\end{document}